\renewcommand{\epsilon}{\varepsilon}
\newcommand{\newsection}[1]
{\subsection{#1}\setcounter{theorem}{0} \setcounter{equation}{0}
\par\noindent}
\newtheorem{theorem}{Theorem}
\newtheorem{lemma}[theorem]{Lemma}
\newtheorem{corr}[theorem]{Corollary}
\newtheorem{proposition}[theorem]{Proposition}
\newtheorem{deff}[theorem]{Definition}
\newcommand{\bth}{\begin{theorem}}
\newcommand{\ble}{\begin{lemma}}
\newcommand{\bcor}{\begin{corr}}
\newcommand{\bdeff}{\begin{deff}}
\newcommand{\bprop}{\begin{proposition}}
\newcommand{\ele}{\end{lemma}}
\newcommand{\ecor}{\end{corr}}
\newcommand{\edeff}{\end{deff}}
\newcommand{\eprop}{\end{proposition}}
\newcommand{\la}{\lambda}
\newcommand{\e}{\varepsilon}
\renewcommand{\Pi}{\varPi}
\renewcommand{\epsilon}{\varepsilon}
\newcommand{\Rt}{{\Bbb R}^2}
\newcommand{\tidle}{\tilde}
\newcommand{\R}{{\mathbb R}}
\newcommand{\1}{{\rm 1\hspace*{-0.4ex}%
\rule{0.1ex}{1.52ex}\hspace*{0.2ex}}}
\newcommand{\tg}{\tilde g}
\newcommand{\dgt}{d_{\tilde g}}
\newcommand{\sqrtg}{\sqrt{-\Delta_g}}
\newcommand{\sqrtd}{\sqrt{-\Delta_{\tilde g}}}
\newcommand{\Pe}{\sqrt{-\Delta}}
\begin{document}

\begin{frontmatter}

\title{Geodesic period integrals of eigenfunctions on  Riemannian 
surfaces and the Gauss-Bonnet Theorem\thanksref{t1}}
\thankstext{t1}{The authors were supported in part by the NSF grant DMS-1361476}
\runtitle{Geodesic period integrals of eigenfunctions}


\author{\fnms{Christopher D.} \snm{Sogge}\ead[label=e1]{sogge@jhu.edu}}
\address{Department of Mathematics\\ Johns Hopkins University\\ Baltimore, MD 21218\\\printead{e1}}
\author{\fnms{Yakun} \snm{Xi}\ead[label=e2]{ykxi@math.jhu.edu}}
\address{Department of Mathematics\\ Johns Hopkins University\\ Baltimore, MD 21218\\\printead{e2}}
\and
\author{\fnms{Cheng} \snm{Zhang}\ead[label=e3]{czhang67@jhu.edu}}
\address{Department of Mathematics\\ Johns Hopkins University\\ Baltimore, MD 21218\\\printead{e3}}

\runauthor{C. D. Sogge, Y. Xi and C. Zhang}

\begin{abstract}
We use the Gauss-Bonnet theorem and the triangle comparison theorems of Rauch and Toponogov to show that on compact Riemannian surfaces of negative curvature
period integrals of eigenfunctions  $e_\la$ over geodesics go to zero
at the rate of $O((\log\la)^{-1/2})$ if $\la$  are their frequencies.  As discussed in \cite{CSPer}, no such result is possible in the constant curvature case if the curvature is $\ge0$.  Notwithstanding, we also show that these bounds for period integrals are valid provided that  integrals of the curvature over all geodesic balls of radius
$r\le 1$ are pinched from above by $-\delta r^N$ for some fixed $N$ and $\delta>0$.  This allows, for instance, the curvature to be nonpositive and to vanish
of finite order at a finite number of isolated points. Naturally, the above results also hold for the appropriate type of quasi-modes.
\end{abstract}

\received{\smonth{4} \sday{19}, \syear{2016}}

\begin{keyword}[class=AMS]
\kwd[Primary ]{35F99;}
\kwd[ Secondary ]{35L20,} {42C99}
\end{keyword}

\begin{keyword}
\kwd{Eigenfunction}
\kwd{negative curvature}
\end{keyword}
\end{frontmatter}
\newsection{Introduction and preliminaries}

Using Kuznecov formulae, Good~\cite{Good} and Hejhal~\cite{Hej} showed that if $\gamma_{per}$ is a periodic geodesic on a compact hyperbolic
surface $M$ then
\begin{equation}\label{i.1}
\Bigl|\, \int_{\gamma_{per}} e_\la \, ds\, \Bigr| \le C_{\gamma_{per}},
\end{equation}
with $ds$ denoting arc length measure on $\gamma_{per}$ and with $e_\la$ denoting the $L^2$-normalized eigenfunction on $M$, i.e., 
$$-\Delta_g e_\la=\la^2 e_\la, \quad \text{and } \, \, \int_M |e_\la|^2 \, dV_g=1.$$
Here $\Delta_g$ denotes the Laplace-Beltrami operator on $(M,g)$ and $dV_g$  is the volume element.

This result was generalized by Zelditch~\cite{ZelK}, who showed that if $\la_j$ are the eigenvalues of $\sqrt{-\Delta_g}$  on
an compact Riemannian surface and if $a_j(\gamma_{per})$ denote the period integrals in \eqref{i.1} 
for an orthonormal basis of eigenfunctions with eigenvalues $\la_j$
then
$$\sum_{\la_j\le \la}|a_j(\gamma_{per})|^2 =c_{\gamma_{per}} \la +O(1),$$
which implies \eqref{i.1}.  Further work for hyperbolic surfaces giving more information about the lower order terms in terms
of geometric data for $\gamma_{per}$ was done by Pitt~\cite{Pitt}.  Since the number of eigenvalues that are smaller than
$\la$ is $O(\la^2)$, this asymptotic formula implies that, on average, one can do much better than \eqref{i.1}.  The problem
of improving this upper bound was raised and discussed in Pitt~\cite{Pitt} and Reznikov~\cite{Rez}.

In an earlier joint paper of Chen and the first author~\cite{CSPer}, it was pointed out that no improvement of \eqref{i.1} is possible on compact
two-dimensional manifolds of constant non-negative curvature.  For instance, on $S^2$, the integrals in \eqref{i.1} have unit size if
$\gamma_{per}$ is the equator and $e_\la$ is an $L^2$-normalized zonal function of even degree.  Also on ${\mathbb T}^2$, for every
periodic geodesic, $\gamma_{per}$, one can find a sequence of eigenvalues $\la_k$ and eigenfunctions $e_{\la_k}$ so that
$e_{\la_k}\equiv 1$ on $\gamma_{per}$ and $\|e_{\la_k}\|_{L^2({\mathbb T}^2)}\approx 1$.  

Despite this, in \cite{CSPer}, it was shown that the period integrals in \eqref{i.1} are $o(1)$ as $\la\to \infty$ if $(M,g)$ has strictly negative
curvature.  The proof exploited the fact that, in this case, quadrilaterals always have their four interior angles summing to a value
strictly smaller than $2\pi$.   This ``defect'' (see Figure~\ref{fig1})  allowed the authors to obtain $o(1)$ decay for period integrals using a stationary
phase argument involving reproducing kernels for the eigenfunctions.

The purpose of this paper is to improve this result in two ways.  First, even though there can be no decay for period integrals for the flat
two-torus, we shall obtain decay if the curvature $K=K_g$ of $(M,g)$ is assumed to be non-positive but allowed to vanish at an averaged
rate of finite type in the sense
that whenever $B_r\subset M$ is a geodesic ball of radius $r\le1$ (and arbitrary center)  we have that
\begin{equation}\label{i.2}
\int_{B_r}K\, dV_g \le -\delta r^N, \quad r\le 1,
\end{equation}
for some fixed $\delta>0$ and $N<\infty$.  Of course if $K\le -\delta$ everywhere then we can take $N=2$ in \eqref{i.2}
(and possibly have to replace $\delta$ by a multiple of itself).  Condition \eqref{i.2} holds, for instance, if the curvature is negative off of a finite
collection of points where it vanishes to finite order.  Besides this improvement, we shall also show that, under the assumption \eqref{i.2}, the period
integrals in \eqref{i.1} are $O((\log\la)^{-1/2})$.

To be more specific, our main result is the following.

\begin{theorem}\label{mainthm}
Let $(M,g)$ be a compact two-dimensional boundaryless manifold.  Assume that its curvature satisfies \eqref{i.2}.  Then if $\gamma(t)$ is a geodesic
in $M$ parametrized by arc length and if $b\in C^\infty_0((-1/2,1/2))$ we have for $\la\gg 1$
\begin{equation}\label{i.3}
\Bigl|\, \int b(t)\,e_\la(\gamma(t))\,  dt \, \Bigr|\le C_{M,b}(\log \la)^{-1/2},
\end{equation}
where $C_{M,b}$ depends on $M$ and $b$, but not on $\gamma$.  Additionally, if $\gamma_{per}$ is a periodic geodesic and if $|\gamma_{per}|$ denotes
its length then for $\la\gg 1$
\begin{equation}\label{i.4}
\Bigl| \, \int_{\gamma_{per}} e_\la \, ds \, \Bigr|\le C_M \, |\gamma_{per}|\, (\log\la)^{-1/2},
\end{equation}
where
$C_M$ depends only on $(M,g)$.
\end{theorem}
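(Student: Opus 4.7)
The plan is a reproducing-kernel / $TT^*$ argument combined with a geometric analysis on the universal cover $(\tilde M, \tilde g)$ of $M$. Fix an even $\rho \in \mathcal S(\mathbb R)$ with $\rho(0) = 1$ and $\hat\rho$ compactly supported, and set $T = c\log\lambda$ with $c > 0$ a small constant to be chosen. Since $P_\lambda := \rho(T(\lambda - \sqrt{-\Delta_g}))$ reproduces $e_\lambda$, inserting $P_\lambda$ inside the period integral, applying Cauchy--Schwarz, and using $\|e_\lambda\|_{L^2(M)} = 1$ reduces \eqref{i.3} to the kernel estimate
\[
\iint b(s)\, b(t)\, K_\lambda(\gamma(s), \gamma(t))\, ds\, dt = O(1/T),
\]
where $K_\lambda$ is the Schwartz kernel of $P_\lambda^2$. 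By Fourier inversion $K_\lambda$ is a $T$-scale weighted average of the wave kernel; finite speed of propagation lets us lift to $(\tilde M, \tilde g)$ and decompose $K_\lambda(\gamma(s), \gamma(t)) = \sum_{\alpha \in \Gamma} K_\lambda^\alpha(s, t)$, a sum over the deck group $\Gamma$ with at most $O(e^{CT})$ nonzero summands. The identity term $\alpha = e$ is handled by the Hadamard parametrix and gives the target $O(1/T)$ after integration against $b(s) b(t)$.

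The core task is to show that $\sum_{\alpha \ne e}$ is also $O(1/T)$. For each such $\alpha$, after a stationary-phase reduction in $\tau$, $K_\lambda^\alpha(s, t)$ is a WKB expression with phase $\lambda\, d_{\tilde g}(\tilde\gamma(s), \alpha\tilde\gamma(t))$ and amplitude of order $\lambda^{1/2}/T$. Stationary phase in $(s, t)$ then yields decay governed by the non-degeneracy of the Hessian of this distance function at the critical point $(s_\alpha, t_\alpha)$ realizing the minimum distance between $\tilde\gamma$ and $\alpha\tilde\gamma$. Via a Jacobi-field computation and Rauch's comparison theorem, this Hessian is controlled from below in terms of the angle between $\tilde\gamma$ and $\alpha\tilde\gamma$ at the minimizing pair.

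A quantitative lower bound on this angle is the geometric heart of the argument and comes from \eqref{i.2} via Gauss--Bonnet: for any geodesic quadrilateral $Q$ with two sides on (arcs of) $\tilde\gamma$ and $\alpha\tilde\gamma$ and two transversal sides, the angle defect equals $-\int_Q K\, dV_{\tilde g}$, which is $\ge \delta r^N$ whenever $Q$ encloses a geodesic ball of radius $r \le 1$. Toponogov's theorem forces any non-trivial such quadrilateral in the non-positively curved $\tilde M$ to enclose a ball whose radius is controlled from below by the geometric scale, and the hypothesis \eqref{i.2} then converts this enclosed-ball radius into a polynomial lower bound on the angle between $\tilde\gamma$ and $\alpha\tilde\gamma$. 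Feeding this back into stationary phase gives oscillatory decay $\lambda^{-\delta'}$ per term; taking $c$ small so that $e^{CT} = \lambda^{Cc} \ll \lambda^{\delta'}$ makes the total sum over $\alpha \ne e$ comfortably $O(1/T)$.

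The main obstacle is exactly this geometric step: in the strictly negatively curved setting of \cite{CSPer} the defect is uniformly positive and yields only the qualitative $o(1)$ bound, whereas our weaker averaged hypothesis \eqref{i.2} must be tracked carefully through Gauss--Bonnet and Toponogov to produce a polynomial lower bound on angles that remains effective at every scale $r \le 1$, even near points where $K$ vanishes to finite order. Once \eqref{i.3} is in hand, the periodic-geodesic bound \eqref{i.4} follows by partitioning $\gamma_{per}$ into $O(|\gamma_{per}|)$ arcs of length $< 1/2$ and summing, with a cutoff $b$ chosen uniformly.
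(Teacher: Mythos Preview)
Your overall architecture matches the paper's: reproducing kernel, $TT^*$ to a bilinear estimate, lift to the universal cover, Hadamard parametrix, and then a stationary-phase analysis with geometric input from Gauss--Bonnet and comparison theorems, with $c$ chosen small enough to beat the exponential growth in the number of deck translates. The identity-term and partition-of-unity reductions are as in the paper.

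Two places where your description diverges from the paper's implementation, and where your version as stated would need work. First, the paper does \emph{not} do two-variable stationary phase via the full $2\times 2$ Hessian of $\phi(\alpha;t,s)=d_{\tilde g}(\tilde\gamma(t),\alpha\tilde\gamma(s))$; only $\partial_t^2\phi\ge e^{-CT}$ is proved (via Toponogov, not Rauch), while $\partial_t\partial_s\phi$ is only shown to be $O(1)$, so no lower bound on the Hessian determinant is available. Instead the paper splits into (i) $|\nabla_{t,s}\phi|\ge \tfrac12\lambda^{-1/3}$ everywhere, handled by one-dimensional integration by parts on a $\lambda^{-1/2}$-grid, and (ii) a point $(t_0,s_0)$ with small gradient, where one localizes to an $O(\lambda^{-\varepsilon_0})$ box, applies one-dimensional stationary phase in $t$ using $\partial_t^2\phi\ge\lambda^{-\varepsilon_0/8}$, and integrates trivially in $s$. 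Second, you have inverted the roles of the geometric tools: Gauss--Bonnet does \emph{not} give a lower bound on the angle at the minimizing pair (there the connecting geodesic meets both curves at angle $\approx\pi/2$, so the gradient vanishes); rather, Gauss--Bonnet plus \eqref{i.2} says that once $(t,s)$ moves $\lambda^{-\varepsilon_0}$ away from $(t_0,s_0)$ the intersection angles must deviate from $\pi/2$ by at least $\lambda^{-1/4}$, yielding a \emph{gradient} lower bound there. Toponogov, by contrast, is what supplies the second-derivative lower bound near the critical point. With these two corrections your outline becomes the paper's proof.
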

 
 If one uses a partition of unity argument, it is clear that \eqref{i.3} implies \eqref{i.4}.  So we only need to prove the former.
 
 The broad strategy will be similar to the earlier work of Chen and the first author~\cite{CSPer}.  We shall need to refine the stationary phase
 arguments used there a bit and use the Gauss-Bonnet theorem to exploit the aforementioned ``defects'' of quadrilaterals that arise in these
 arguments, which allow one to obtain favorable control of lower bounds for first and second derivatives of the phase functions 
 occurring in the stationary phase arguments (unlike in the case of the two-torus).

This paper is organized as follows.  In the next section we shall show that we can prove \eqref{i.3} by estimating
integrals over geodesics in the universal cover of $(M,g)$ that arise from reproducing kernels for eigenfunctions.
We shall also see here that \eqref{i.3} also holds when the eigenfunctions are replaced by appropriate types of 
quasi-modes.  In \S 3, using the Gauss-Bonnet theorem and triangle comparison theorems, we shall collect
the geometric facts that we shall need for our estimates.  
In \S 4 we shall derive some simple one-dimensional stationary phase estimates that will be needed for our proof.
In the next section, we shall use the Hadamard parametrix
to show that the oscillatory integrals that we need to estimate lend themselves to  these stationary phase estimates.  We
shall also show that we can get favorable bounds for first and second derivatives of the phase functions using the
aforementioned geometric facts.  In the final section we put things together and finish the proof of our main
estimate \eqref{i.3}.

In what follows, as we may, we shall assume that the injectivity radius of $(M,g)$ is ten or more and that
its nonpositive curvature is pinched below by $-1$, i.e., $-1\le K\le 0$.

 \newsection{Hadamard's theorem and a standard reduction}
 
To prove \eqref{i.3} let us first fix a real-valued function $\rho\in {\mathcal S}(\R)$ satisfying
$$\rho(0)=1 \quad \text{and } \, \, \Hat \rho(\tau)=0 , \quad |\tau|\ge 1/4.$$
Then since $\rho(T(\la-\sqrt{-\Delta_g}))e_\la=e_\la$, for any $T>0$, in order to prove \eqref{i.3} it suffices
to show that we can choose $T=T(\la)$ so that for $\la\gg 1$ we have the uniform bounds
\begin{equation}\label{2.1}
\Bigl|\, \int b(t) \bigl(\rho(T(\la-\sqrt{-\Delta_g}))f\bigr)(\gamma(t)) \, dt \, \Bigr|\le C_{M,b}\, (\log\la)^{-1/2}\, \|f\|_{L^2(M)}.
\end{equation}
To do this we shall take
\begin{equation}\label{2.2}
T=c\log\la,
\end{equation}
where $c=c_M>0$ is a small constant depending on $(M,g)$.

Let $\{e_j\}$ be an orthonormal basis of eigenfunctions with eigenvalues $\{\la_j\}$, and let
$$E_jf=\langle f,e_j\rangle e_j,$$
denote the projection of $f\in L^2(M)$ onto the eigenspace with eigenvalue $\la_j$.  
Then since $\rho(\tau)\ge 1/2$ for $|\tau|\le \delta$, some
$\delta>0$, clearly \eqref{2.1}-\eqref{2.2} imply that 
\begin{multline}\label{2.3}
\Bigl|\, \int b(t) \bigl(\rho(T(\la-\sqrt{-\Delta_g})) \chi_{[\la-(\log\la)^{-1},\la+(\log\la)^{-1}]}f\bigr)(\gamma(t)) \, dt \, \Bigr|
\\
\le C_{M,b}\, (\log\la)^{-1/2}\, \|f\|_{L^2(M)},
\end{multline}
if
$$ \chi_{[\la-(\log\la)^{-1},\la+(\log\la)^{-1}]}f=\sum_{|\la-\la_j|\le (\log\la)^{-1}}E_j f$$
denotes the projection of $f$ onto a spectral band of width $(\log\la)^{-1}$ about $\la$.
Using standard arguments (see \cite{SZQuas}) one sees from this that we have
\begin{equation}\label{2.4}
\Bigl|\, \int b(t) \, \Psi_\la(\gamma(t)) \, dt \, \Bigr| \le C_{b,M} (\log\la)^{-1/2},
\end{equation}
for quasi-modes $\Psi_\la$ satisfying
\begin{equation}\label{2.5}
(\log\la/\la)\|(\Delta_g+\la^2)\Psi_\la\|_{L^2(M)}+\|\Psi_\la\|_{L^2(M)}\le 1
\end{equation}
with $\la \gg 1$.
Of course \eqref{2.4} implies that when \eqref{2.5} holds we also have the following analog of \eqref{i.4}
\begin{equation}\label{2.6}
\Bigl| \, \int_{\gamma_{per}}\Psi_\la \, ds \, \Bigr|\le C_M|\gamma_{per}| \, (\log\la)^{-1/2}
\end{equation}
if $\gamma_{per}$ is a periodic geodesic in $M$.

To set up the proof of \eqref{2.1} we first note that the kernel of the operator there is given by
$$\rho\bigl(T(\la-\sqrt{-\Delta_g})\bigr)(x,y)=\sum_j \rho(T(\la-\la_j)) e_j(x)\overline{e_j(y)}.$$
By Schwarz's inequality, we would have \eqref{2.1} if we could show that
\begin{multline*}\int_M\Bigl|\, \int b(t) \sum_j \rho\bigl(T(\la-\la_j)\bigr) \, e_j(\gamma(t)) \overline{e_j(y)} \, dt \, \Bigr|^2 \, dV_g(y)
\le C_{b,M}(\log\la)^{-1}, 
\\
 \la\gg 1.
\end{multline*}
By orthogonality, if $\chi(\tau)=(\rho(\tau))^2$, this is equivalent to showing that if
$$b(t,s)=b(t)b(s)\in C^\infty_0((-1/2,1/2)^2),$$
then
\begin{equation}\label{2.7}
\Bigl|\, \iint b(t,s) \sum_j \chi(T(\la-\la_j)) e_j(\gamma(t)) \overline{e_j(\gamma(s))} \, dt ds\, \Bigr|
\le C_{b,M}(\log \la)^{-1},
\end{equation}
if $\la \gg 1$.

Note that
$$\sum_j \chi(T(\la-\la_j))e_j(x)\overline{e_j(y)} =\frac1{2\pi T} \int \Hat \chi(\tau/T) e^{i\tau \la} 
\bigl(e^{-i\tau \sqrt{-\Delta_g}}\bigr)(x,y) \, d\tau.$$
As a first step in the proof of \eqref{2.7} fix  a bump function $\beta\in C^\infty_0(\R)$ satisfying
$$\beta(\tau)=1, \, \, \, |\tau|\le 3 \quad \text{and } \, \, \beta(\tau)=0, \, \, \, |\tau|\ge 4.$$
Then the proof of Lemma 5.1.3 in \cite{SFIO} shows that, because of our assumption that the injectivity radius of
$(M,g)$ is ten or more, we can write
\begin{multline}\label{2.8}
\frac1{2\pi T} \int \beta(\tau) \Hat \chi(\tau/T) e^{i\tau \la} \bigl(e^{-i\tau\sqrt{-\Delta_g}}\bigr)(x,y) \, d\tau
\\
=\frac{\la^{1/2}}T \sum_\pm a_\pm(\la; d_g(x,y))e^{\pm i \la d_g(x,y)} +O(1/T),
\end{multline}
if $d_g$ denotes the Riemannian distance on $(M,g)$, where
\begin{equation}\label{2.9}
\Bigl|\, \frac{d^j}{dr^j} a_\pm(\la;r)\, \Bigr|\le C_j r^{-j-1/2} \quad \text{if } \, \, r\ge \la^{-1},
\end{equation}
and
\begin{equation}\label{2.10}
|a_\pm(\la;r)|\le C\la^{1/2} \quad \text{if } \, \, \, 0\le r\le \la^{-1}.
\end{equation}

Since $d_g(\gamma(t),\gamma(s))=|t-s|$, we conclude from \eqref{2.8} that we would have, for a given $c>0$,
\begin{multline}\label{2.11}
\frac1{2\pi T}\Bigl|\, \iiint b(t,s) \beta(\tau)\Hat \chi(\tau/T) e^{i\tau\la} \bigl(e^{-i\tau\sqrt{-\Delta_g}}\bigr)(\gamma(t),\gamma(s)) \, d\tau dt ds\, \Bigr|
\\
\le C_{b,M}(\log\la)^{-1}, \quad \text{if } \, \, T=c\log\la,
\end{multline}
if
$$\la^{1/2}\Bigl|\, \iint b(t,s)e^{\pm i\la|t-s|} a_\pm(\la;|t-s|) \, dt ds \, \Bigr|\le C_{b,M}.$$
Since the latter estimate is a simple consequence of \eqref{2.9} and \eqref{2.10}, we obtain \eqref{2.11}.

In view of \eqref{2.11}, we conclude that we would have \eqref{2.7} if we could obtain the following bounds for
the remaining part of $\chi(T(\la-\sqrt{-\Delta_g}))$:
\begin{multline*}\label{2.12}
\frac1{2\pi T}\Bigl| \iiint b(t,s)\bigl(1-\beta(\tau)\bigr) \, \Hat \chi(\tau/T) e^{i\tau\la}
\bigl(e^{-i\tau\sqrt{-\Delta_g}}\bigr)(\gamma(t),\gamma(s)) \, d\tau ds dt \, \Bigr|
\\
\le C_{b,M}(\log\la)^{-1},
\end{multline*}
if $T$ is as in \eqref{2.2}.
Note that for $T\ge 1$ we have the uniform bounds
$$\frac1{2\pi T}\Bigl| \, \int \bigl(1-\beta(\tau)\bigr) \Hat \chi(\tau/T) e^{i\tau (\la+\la_j)} \, d\tau\, \Bigr|
\le C_N(1+|\la+\la_j|)^{-N}, \, \, \, N=1,2,\dots,$$
and so, since $\la_j\ge 0$ and $\la\gg 1$,

\begin{multline} \frac1{2\pi T}\Bigl| \, \int \bigl(1-\beta(\tau)\bigr) \Hat \chi(\tau/T) e^{i\tau \la}
\bigl(e^{i\tau \sqrt{-\Delta_g}}\bigr)(\gamma(t),\gamma(s)) \, d\tau\, \Bigr| 
\\
\le C_N(1+\la)^{-N}, \, \, \, N=1,2,\dots .\end{multline}

Thus, by Euler's formula, to prove \eqref{2.12}, it suffices to show that if $T$ is as in \eqref{2.2} (for an appropriate choice of $c=c_M>0$) we have
\begin{multline}\label{2.13}
\Bigl| \, \iiint b(t,s) \bigl(1-\beta(\tau)\bigr)\Hat \chi(\tau/T) e^{i\tau \la} \bigl(\cos \tau \sqrt{-\Delta_g}\bigr)(\gamma(t),\gamma(s)) \, d\tau dt ds \, \Bigr|
\\
\le C_{b,M}.
\end{multline}
Here $\bigl(\cos \tau\sqrt{-\Delta_g}\bigr)(x,y)$ is the kernel for the map $C^\infty(M)\ni f\to u\in C^\infty(\R\times M)$ solving the Cauchy problem
with initial data $(f,0)$, i.e.,
$$\bigl(\partial_\tau^2-\Delta_g\bigr)u=0, \quad u(0, \, \cdot \, )=f, \quad \partial_\tau u(0, \, \cdot \, )=0.$$

To be able to compute the integral in \eqref{2.13} we need to relate this wave kernel to the corresponding one in the universal cover for $(M,g)$.  Recall that by a theorem of Hadamard (see \cite[Chapter 7]{doCarmo}) for every point $P\in M$, the exponential map at $P$, $\exp_P: T_PM\to M$ is a covering map.
We might as well take $P=\gamma(0)$ to be the midpoint of the 
geodesic segment$\{\gamma(t): \, |t|\le \tfrac12\}$.  If we identify $T_PM$ with $\Rt$, and let $\kappa$ denote this exponential
map then $\kappa: \Rt\to M$ is a covering map.  We also will denote by $\tilde g$ the metric on ${\mathbb R}^2$ which is the pullback via $\kappa$ of the the metric
$g$ on $M$.  Also, let $\Gamma$ denote the group of deck transformations, which are the diffeomorphisms $\alpha$ from $\Rt$ to itself preserving $\kappa$, i.e., $\kappa = \kappa \circ \alpha$.  
Next, let
$$D_{Dir}=\{ \tilde y\in \Rt: \, d_{\tg}(0,\tilde y)<\dgt(0, \alpha(\tilde y)), \, \forall \alpha \in \Gamma, \, \, \alpha \ne Identity\}$$
be the Dirichlet domain for $(\Rt,\tg)$, where $\dgt(\, \cdot \, ,\, \cdot \, )$ denotes the Riemannian distance function for $\Rt$ corresponding to
the metric $\tg$.   We can then add to $D_{Dir}$ a subset of $\partial D_{Dir}=\overline{D_{Dir}}\backslash
\text{Int }(D_{Dir})$ to obtain a natural fundamental domain $D$, which has the property that $\Rt$ is the disjoint union of the
$\alpha(D)$ as $\alpha$ ranges over $\Gamma$ and $\{\tilde y\in \Rt: \, \dgt(0,\tilde y)<10\} \subset D$ since we are assuming that the injectivity
radius of $(M,g)$ is more than ten.  It then follows that we can identify every point $x\in M$ with the unique point $\tilde x\in D$ having the property
that $\kappa(\tilde x)=x$.  Let also $\tilde \gamma(t)$, $|t|\le \tfrac12$ similarly denote those points in $D$ corresponding to our geodesic segment
$\gamma(t)$, $|t|\le \tfrac12$ in $M$.  Then $\{\tilde \gamma(t): |t|\le \tfrac12\}$ is a line segment of unit length whose midpoint is the origin, and we shall denote just by
$\tilde \gamma$ the line through the origin containing this segment.  Note that $\tilde \gamma$ then is a geodesic in $\Rt$ for the metric $\tg$, and 
the Riemannian distance between two points on $\tilde \gamma$ agrees with their Euclidean distance.
Finally, if $\Delta_{\tg}$ denotes the Laplace-Beltrami operator associated to $\tg$ then since solutions of the above Cauchy problem for $(M,g)$ correspond exactly
to periodic (i.e. $\Gamma$-invariant) solutions of the corresponding Cauchy problem associated to $\partial^2_t-\Delta_{\tg}$, we have the following
important formula relating the wave kernel on $(M,g)$ to the one for the universal cover $(\Rt,\tg)$:
\begin{equation}\label{2.14}
\bigl(\cos \tau \sqrtg \big)(x,y)=\sum_{\alpha\in \Gamma}\bigl(\cos \tau \sqrtd  \bigr)(\tilde x, \alpha(\tilde y)).
\end{equation}

Due to this formula, we would have \eqref{2.13} if we could show that for $T$ as in \eqref{2.2},
\begin{multline}\label{2.15}
\sum_{\alpha\in \Gamma} \Bigl| \, \iiint b(t,s) \bigl(1-\beta(\tau)\bigr) \Hat \chi(\tau/T) e^{i\tau \la}
\bigl(\cos \tau \sqrtd \bigr)(\tilde \gamma(t),\alpha(\tilde \gamma(s))) \, d\tau dt ds \, \Bigr|
\\
\le C_{b,M}.
\end{multline}

By Huygens principle, $\bigl(\cos \tau \sqrtd \bigr)(\tilde x,\tilde y)=0$ if $d_{\tilde g}(\tilde x,\tilde y)>\tau$, where $d_{\tilde g}$ denotes the
Riemannian distance on $(\Rt,\tilde g)$.  Since $\chi=\rho^2$ our assumption that $\rho(\tau)=0$ for $|\tau|\ge 1/4$ means that the integrand in
\eqref{2.15} vanishes when $|\tau|\ge T/2$.  Therefore, since there are $O(\exp(C_MT))$ ``translates'' of $D$ satisfying $d_{\tilde g}(D,\alpha(D))<T$,
we conclude that the sum in \eqref{2.15} involves $O(\exp(C_MT))$ nonzero terms.  Based on this, we conclude that we would have
\eqref{2.15} if we could prove the following.

\begin{proposition}\label{prop2.1}  Given our $(M,g)$ sastisfying \eqref{i.2} we can fix $c=c_M>0$ so that we have for $\la\gg 1$
\begin{multline}\label{2.16}
 \Bigl| \, \iiint b(t,s) \bigl(1-\beta(\tau)\bigr) \Hat \chi(\tau/T) e^{i\tau \la}
\bigl(\cos \tau \sqrtd \bigr)(\tilde \gamma(t),\alpha(\tilde \gamma(s))) \, d\tau dt ds \, \Bigr|
\\
\le C_{b,M} \la^{-\delta_M} \quad \text{if } \, \, \, T=c\log \la,
\end{multline}
for some $\delta_M>0$ which depends on $M$ but not on $b$ or $\la$.
\end{proposition}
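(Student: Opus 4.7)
The broad strategy will be to (i) apply the Hadamard parametrix on the Cartan--Hadamard universal cover $(\Rt,\tg)$ to convert the $\tau$-integration in \eqref{2.16} into an oscillatory factor, reducing everything to a two-dimensional oscillatory integral in $(t,s)$ with phase $\pm\la\phi_\alpha(t,s)$, where
$$\phi_\alpha(t,s)=\dgt\bigl(\tilde\gamma(t),\alpha(\tilde\gamma(s))\bigr);$$
(ii) use the Gauss--Bonnet theorem together with the Rauch and Toponogov comparison theorems promised in \S 3 to extract quantitative non-degeneracy of $\phi_\alpha$ from the curvature hypothesis \eqref{i.2}; and (iii) conclude via the one-dimensional stationary phase estimates of \S 4.

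\textbf{Step (i): Hadamard reduction.} Since $(\Rt,\tg)$ is Cartan--Hadamard its injectivity radius is infinite, so a parametrix representation analogous to \eqref{2.8}--\eqref{2.10} should be valid globally. Carrying out the $\tau$-integration would then give
$$\frac{1}{2\pi T}\int(1-\beta(\tau))\widehat\chi(\tau/T)e^{i\tau\la}(\cos\tau\sqrtd)(\tilde\gamma(t),\alpha(\tilde\gamma(s)))\,d\tau=\frac{\la^{1/2}}{T}\sum_\pm a_\pm\bigl(\la;\phi_\alpha(t,s)\bigr)e^{\pm i\la\phi_\alpha(t,s)}+O(\la^{-\infty}),$$
with symbols $a_\pm$ satisfying estimates of the form \eqref{2.9}. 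Finite propagation speed, $\supp\widehat\chi\subset[-1/4,1/4]$, and the factor $(1-\beta)$ together confine the support to $3\lesssim\phi_\alpha(t,s)\lesssim T/2\sim c\log\la$, so $|a_\pm|\lesssim\phi_\alpha^{-1/2}$ with matching control on $(t,s)$-derivatives. The problem thereby reduces to bounding $I_\alpha:=\frac{\la^{1/2}}{T}\iint b(t,s)\,a_\pm\,e^{\pm i\la\phi_\alpha}\,dt\,ds$ by $C_{b,M}\la^{-\delta_M}$.

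\textbf{Step (ii): Phase geometry via Gauss--Bonnet.} The first variation formula gives $\partial_t\phi_\alpha=-\cos\theta_1$ and $\partial_s\phi_\alpha=-\cos\theta_2$, where $\theta_1,\theta_2$ are the angles that the minimizing segment from $\tilde\gamma(t)$ to $\alpha(\tilde\gamma(s))$ makes at its endpoints with $\tilde\gamma$ and $\alpha(\tilde\gamma)$. A critical point $(t_0,s_0)$ of $\phi_\alpha$ thus corresponds to a common perpendicular of the two geodesics, unique if it exists in the Cartan--Hadamard setting. The four points $\tilde\gamma(t_0),\tilde\gamma(t),\alpha(\tilde\gamma(s)),\alpha(\tilde\gamma(s_0))$ span a geodesic quadrilateral $Q$ whose defect from $2\pi$ equals $-\int_Q K\,dV_{\tg}\ge 0$ by Gauss--Bonnet, and \eqref{i.2} combined with Rauch/Toponogov comparison (placing a geodesic ball of radius $\le 1$ inside $Q$) should give a uniform positive lower bound on this defect once $(t,s)$ is a definite distance from $(t_0,s_0)$. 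Differentiating the defect relation and using comparison to control the intermediate variations should yield
$$|\partial_s\partial_t\phi_\alpha(t_0,s_0)|\gtrsim(\log\la)^{-A}$$
for a fixed $A=A(M)$, with a matching lower bound on the Hessian determinant; away from the critical point, strict convexity of the distance function in nonpositive curvature (again via Rauch) provides a comparable lower bound on $|\nabla\phi_\alpha|$.

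\textbf{Step (iii) and main obstacle.} A smooth cutoff localizes the $(t,s)$ integration to a $(\log\la)^{-B}$-neighborhood of the critical point (if one occurs in the support; otherwise $\phi_\alpha$ is everywhere non-stationary). Outside this neighborhood, integration by parts against the lower bound on $|\nabla\phi_\alpha|$ gives rapid decay; inside, the 1D stationary phase lemmas of \S 4, applied successively in $s$ and then $t$, yield a gain of order $\la^{-1}(\log\la)^{A'}$. Combined with the prefactor $\la^{1/2}/T$ and $|a_\pm|\le 1$, this produces $|I_\alpha|\lesssim\la^{-1/2+\epsilon}$, comfortably of the desired form $C_{b,M}\la^{-\delta_M}$. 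The hard part will be step (ii): \eqref{i.2} only controls curvature on scale $r\le 1$, whereas the quadrilaterals relevant here have sides up to $T/2\sim c\log\la$, so the Gauss--Bonnet bookkeeping must \emph{lose at worst} $(\log\la)^A$, not an exponential in $T$, in order that after summation over the $O(e^{C_MT})=O(\la^{C_Mc})$ contributing deck transformations, choosing $c=c_M$ sufficiently small still leaves a positive net exponent $\delta_M$.
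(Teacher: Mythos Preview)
Your overall architecture---Hadamard parametrix on the cover, then stationary phase in $(t,s)$ with geometric input---matches the paper's. But step~(ii) misidentifies the nondegeneracy mechanism, and this would block step~(iii) as you have written it.

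You aim for a lower bound on $|\partial_s\partial_t\phi_\alpha|$ and on the Hessian determinant at the critical point, then plan to run stationary phase ``successively in $s$ and then $t$.'' No such lower bound is established (or sought) in the paper; indeed only the \emph{upper} bound $|\partial_t\partial_s\phi_\alpha|\le C$ is proved (see \eqref{5.20}), and in the flat limit the Hessian of the distance between two lines is genuinely degenerate, so a two-variable nondegenerate stationary phase argument is not available in this generality. The roles of the two comparison inputs are separated differently: Gauss--Bonnet together with \eqref{i.2} is used \emph{only} to show that if $|\nabla_{t,s}\phi_\alpha(t_0,s_0)|\le\tfrac12\la^{-1/3}$ at one point then $|\nabla_{t,s}\phi_\alpha|\ge\tfrac12\la^{-1/4}$ once $\max(|t-t_0|,|s-s_0|)\ge\la^{-\e_0}$, with $\e_0=1/(5N)$ (Propositions~\ref{propg.1} and \ref{prop5.3}); this confines the bad set to a box of side $\la^{-\e_0}$. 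It is Toponogov (Proposition~\ref{topprop}, via \eqref{5.21}) that supplies the \emph{pure} second-derivative lower bound $\partial_t^2\phi_\alpha\ge e^{-CT}\ge\la^{-\e_0/8}$ on that box. One then applies one-dimensional stationary phase (Proposition~\ref{propositions.3}) in $t$ alone, gaining $\la^{-1/2+\e_0/2}$, and integrates \emph{trivially} over the $s$-interval of length $\la^{-\e_0}$. The net saving is $\la^{-\e_0/2}$, far weaker than the $\la^{-1/2+\epsilon}$ you project; $\delta_M=\e_0/2=1/(10N)$.

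Two secondary corrections. First, the remainder after $\tau$-integration is not $O(\la^{-\infty})$: the Hadamard coefficients and parametrix error grow like $e^{Cd}$, and one obtains only $|R_{T,\la}|\le\la^{-1}$ by taking $c$ small in $T=c\log\la$ (Proposition~\ref{prop5.1}). Second, the losses you worry about in your ``main obstacle'' are not $(\log\la)^A$ but honest powers $e^{CT}=\la^{Cc}$; the resolution is exactly that choosing $c=c_M$ small makes every such loss $\le\la^{\e_0/8}$ (see \eqref{6.3}), which the stationary-phase gains then absorb.
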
 

The power $\delta_M$ in \eqref{2.16}  depends on the power $N$ in our assumption \eqref{i.2}.  As we shall see we can
take it to be $1/10N$.
 
 \newsection{Geometric tools}
 
 In this section we are working with ${\mathbb R}^2$ equipped with the metric $\tilde g$ which is the pullback of 
the metric $g$ on $M$ via the covering map.  Thus, if $K$ denotes the Gaussian curvature on $({\mathbb R}^2,\tilde g)$ and 
if $B_r(\tilde x)$ denotes a geodesic ball of radius $r$ centered at some $\tilde x\in {\mathbb R}^2$, our curvature assumption \eqref{i.2} on
$(M,g)$ lifts to 
\begin{multline}\label{g.1}
\int_{B_r(\tilde x)}K\, dV \le - \delta r^N, \quad \text{if } \, r<1,     \, \ \text{and } \, \,  \tilde x\in {\mathbb R}^2,  
\\
\text{for some } \, \delta>0 \, \, \text{and } \, \,  N\ge2.
\end{multline}

To prove our estimates for period integrals over geodesics we shall require a couple consequences of elementary results from
Riemannian geometry.  One is based on \eqref{g.1} and the Gauss-Bonnet theorem.
As we pointed this assumption is valid when the curvature on $M$ is pinched from above by a negative  constant but allows 
situations where the curvature is nonpositive and vanishes on lower dimensional sets.  The other result is based on Togonogov's theorem 
and the fact that we are assuming that the curvature on $(M,g)$ and hence on $(\Rt,\tilde g)$ is pinched below by $-1$.

Let us now state the two geometric results that will play a key role in our analysis.

\begin{proposition}\label{propg.1}  Let $\tilde \gamma_1(t)$ and $\tilde \gamma_2(s)$, $|s|, \, |t|\le 1/2$ be two  unit length geodesics in $({\mathbb R}^2,\tilde g)$ parameterized by arc length
satisfying $d_{\tilde g}(\tidle \gamma_1(t),\, \tilde \gamma_2(s))\ge 1$, $|t|, \, |s| \le 1/2$.  Suppose that there is a $(t_0,s_0)\in [-1/2,1/2]\times [-1/2,1/2]$ so that the geodesic through $\tilde \gamma_1(t_0)$ and 
$\tilde \gamma_2(s_0)$ intersects $\tilde \gamma_1$  with angle $\theta_{t_0}$ and $\tilde \gamma_2$  with angle
$\theta_{s_0}$ (see Figure~\ref{fig0}) and suppose further that
\begin{equation}\label{g.2}
\theta_{t_0},\theta_{s_0} \in [\pi/2- \lambda^{-1/3}, \, \pi/2].
\end{equation}
Then if 
\begin{equation}\label{g.3}
\e_0=1/5N
\end{equation}
where $N$ is as in \eqref{g.1}
and if $\lambda$ is larger than a fixed constant
\begin{multline}\label{g.4}
\max \bigl(\pi/2-\theta_t, \, \pi/2-\theta_s \bigr) \ge \la^{-1/4}, 
\\
 \text{if } \, \, t,s\in [-1/2,1/2] \, \, \text{and } \,  \max\bigl(|t-t_0|,\, |s-s_0|\bigr)\ge \la^{-\e_0},
\end{multline}
if $\theta_t$ denotes the intersection angle of  $\tilde \gamma_1$ and the geodesic through $\tilde \gamma_1(t)$ and $\tilde \gamma_2(s)$ and $\theta_s$ denotes
the intersection angle of this geodesic and $\tilde \gamma_2$.
\end{proposition}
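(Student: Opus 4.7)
The plan is an argument by contradiction, playing off two competing bounds on $\int_Q K\,dV$ for a suitable geodesic quadrilateral $Q$.

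Suppose the conclusion fails: there exist $t,s\in[-1/2,1/2]$ with $\max(|t-t_0|,|s-s_0|)\ge \la^{-\e_0}$ and yet $\pi/2-\theta_t<\la^{-1/4}$ and $\pi/2-\theta_s<\la^{-1/4}$. Because $(\Rt,\tg)$ is a Cartan--Hadamard surface (simply connected, nonpositive curvature), geodesics joining any two points are unique, so the geodesic quadrilateral $Q$ with vertices $\tilde\gamma_1(t_0),\,\tilde\gamma_1(t),\,\tilde\gamma_2(s),\,\tilde\gamma_2(s_0)$ is well defined and topologically a disk. Each of its four interior angles agrees with one of $\theta_{t_0},\theta_{s_0},\theta_t,\theta_s$ or the supplement $\pi$ minus that angle; in either case it lies within $O(\la^{-1/4})$ of $\pi/2$, by the standing hypothesis on $\theta_{t_0},\theta_{s_0}$ and the contradiction hypothesis on $\theta_t,\theta_s$. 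The Gauss--Bonnet formula then gives
$$
\int_Q K\,dV \;=\; \sum_{i=1}^4 \alpha_i \;-\; 2\pi \;\ge\; -C_1\la^{-1/4}.
$$

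The technical core is to exhibit a geodesic ball $B_r(P)\subset Q$ of radius $r\ge c\la^{-\e_0}$. Assume without loss of generality $|t-t_0|\ge \la^{-\e_0}$, and take $P$ to be the point obtained by travelling a perpendicular $\tg$-distance $\la^{-\e_0}/4$ from the midpoint $\tilde\gamma_1(t_0+(t-t_0)/2)$ into $Q$. Using Rauch/Toponogov comparison against the constant curvature $-1$ model (applicable since $-1\le K\le 0$), I would verify that the ball of radius $r=c\la^{-\e_0}$ about $P$ lies in $Q$ by checking the four side distances separately: $P$ is far from $\tilde\gamma_1$ by construction; far from $\tilde\gamma_2$ because $d_{\tg}(\tilde\gamma_1,\tilde\gamma_2)\ge 1$ and $P$ is close to $\tilde\gamma_1$; and far from each of the two transverse sides because those sides leave $\tilde\gamma_1$ at angles within $\la^{-1/4}$ of $\pi/2$, so at perpendicular level $\la^{-\e_0}/4$ they have drifted along $\tilde\gamma_1$ by only $O(\la^{-\e_0-1/4})$, a quantity much smaller than the $\ge\la^{-\e_0}/2$ along-$\tilde\gamma_1$ separation between $P$'s foot and either transverse endpoint.

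Combining the ball inclusion with \eqref{g.1} and $K\le 0$ gives
$$
\int_Q K\,dV \;\le\; \int_{B_r(P)} K\,dV \;\le\; -\delta r^N \;\le\; -c_2\la^{-N\e_0} \;=\; -c_2\la^{-1/5},
$$
which contradicts the Gauss--Bonnet lower bound once $c_2\la^{-1/5}>C_1\la^{-1/4}$, i.e.\ once $\la^{1/20}>C_1/c_2$. The choice $\e_0=1/(5N)$ is precisely what makes the exponents balance, since $N\e_0=1/5<1/4$. The main obstacle is the second step --- the comparison-geometry verification that the chosen ball genuinely fits inside $Q$, since one must control quantitatively how the two transverse sides deviate from perpendicular at the small scale $\la^{-\e_0}$, exploiting the two-sided pinching $-1\le K\le 0$.
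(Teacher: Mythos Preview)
Your overall strategy---Gauss--Bonnet on a geodesic polygon combined with an inscribed ball of radius $\approx\la^{-\e_0}$ and the curvature hypothesis \eqref{g.1}---is exactly what the paper does, and your bookkeeping on the exponents is correct.  The genuine gap is the sentence ``so the geodesic quadrilateral $Q$ \dots\ is well defined and topologically a disk.''  Cartan--Hadamard gives you unique geodesics between pairs of points, but it does \emph{not} guarantee that the two ``transverse'' sides (the geodesic from $\tilde\gamma_1(t_0)$ to $\tilde\gamma_2(s_0)$ and the one from $\tilde\gamma_1(t)$ to $\tilde\gamma_2(s)$) are disjoint.  If they cross, $Q$ is a bowtie, not a disk, and Gauss--Bonnet in the form you use does not apply.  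There is also the degenerate case $s=s_0$ (still allowed since you only assume $\max(|t-t_0|,|s-s_0|)\ge\la^{-\e_0}$), in which $Q$ collapses to a triangle.

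The paper handles this by an explicit case split rather than by contradiction: if the transverse segments are disjoint it runs your quadrilateral argument; if they cross at a point $P$ it applies Gauss--Bonnet instead to the geodesic triangle with vertices $\tilde\gamma_1(t_0),\tilde\gamma_1(t),P$ (the inscribed ball still fits along the $\tilde\gamma_1$ side, and the positive angle at $P$ only helps); and the case $s=s_0$ is treated with the triangle $\tilde\gamma_1(t_0),\tilde\gamma_1(t),\tilde\gamma_2(s_0)$.  Under your contradiction hypothesis (all four intersection angles within $\la^{-1/4}$ of $\pi/2$) one can in fact argue that the crossing case cannot occur---two geodesics leaving $\tilde\gamma_1$ nearly orthogonally on the same side cannot meet close by in nonpositive curvature without creating a triangle of angle sum $>\pi$---but that argument has to be made, and you do not make it.  Either supply it, or follow the paper and treat the triangle cases separately.
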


\begin{figure}[!ht]
  \centering
    \includegraphics[width=0.9\textwidth]{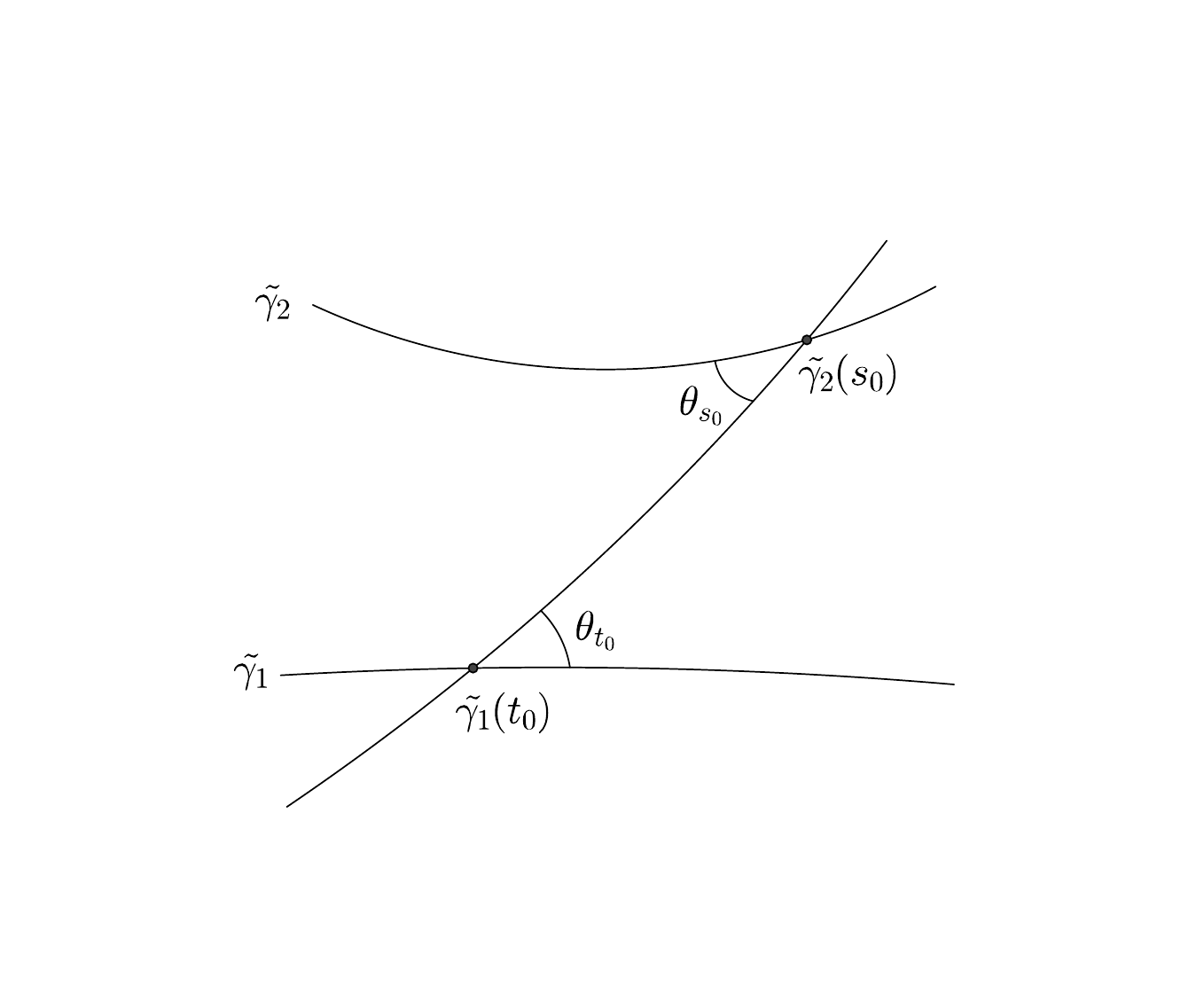}
      \caption{}
      \label{fig0}
\end{figure}

The other proposition that we need is the following simple consequence of Toponogov's theorem
which was used in earlier joint work of the first author and Blair \cite{BSTop}.

\begin{proposition} \label{topprop}  
As above assume that the Gaussian curvature of $({\mathbb R}^2,\tilde g)$ satisfies
$$K\ge -1.$$
Let $\tilde \gamma(t)$, $t\in \R$, be a geodesic with $\tilde \gamma(0)=P_0$.  Given $T\gg 1$,
let $C(\theta;T)$, $\theta\ll 1$, denote the set of points $Q\in B_T(P_0)$ which lie on a geodesic
though $P_0$ which intersects $\tilde \gamma$ of angle $\le \theta$.
Thus, $C(\theta;T)$ is the intersection of the geodesic ball $B_T(P_0)$ 
of radius $T$ about $P_0$
with the  cone of aperture $\theta$ about $\tilde \gamma$ with
vertex $P_0$.
  Then if $0<r\le 1$ and, if
$${\mathcal T}_r(\tilde \gamma)=\{x\in {\mathbb R}^2: \, d_{\tilde g}(x,\tilde \gamma)\le r\}$$
denotes the tube of radius $r$ about $\tilde \gamma$, we have that 
\begin{equation}\label{g.5}
C(\theta_{T,r};T)\subset {\mathcal T}_r(\tilde \gamma), \quad \text{if } \, \, \sin \tfrac12\theta_{T,r} = \frac{\sinh \tfrac12r}{\sinh T}, 
\quad \text{if } \, T>0.
\end{equation}
\end{proposition}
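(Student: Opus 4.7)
The plan is a direct application of Toponogov's hinge comparison theorem, with the hyperbolic plane $\mathbb{H}^2$ of constant curvature $-1$ as the model space. Since $(\Rt,\tg)$ is complete and satisfies $K\ge -1$, Toponogov's hinge version applies globally: for any geodesic hinge at $P_0$ with two sides of lengths $a,b$ and opening angle $\alpha$, the length of the opposite side in $(\Rt,\tg)$ is no larger than the corresponding opposite side length in the comparison hinge in $\mathbb{H}^2$.

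Concretely, let $Q\in C(\theta_{T,r};T)$. By the paper's standing assumption $-1\le K\le 0$, the universal cover $(\Rt,\tg)$ is Cartan--Hadamard, so the geodesic from $P_0$ to $Q$ is unique and minimizing; let $s=\dgt(P_0,Q)\le T$ and let $\alpha\le\theta_{T,r}$ be its angle with $\tilde\gamma$ at $P_0$. Choose $P'=\tilde\gamma(\pm s)$ on $\tilde\gamma$ at distance $s$ from $P_0$, with the sign chosen so that the hinge at $P_0$ built from $P_0Q$ and $P_0P'$ opens with angle $\alpha$. Toponogov then gives
$$\dgt(Q,P')\le \tilde d,$$
where $\tilde d$ is the third side of the comparison hinge in $\mathbb{H}^2$.

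The hyperbolic law of cosines applied to an isoceles hinge of sides $s,s$ and angle $\alpha$ reads
$$\cosh\tilde d=\cosh^2 s-\sinh^2 s\,\cos\alpha=1+2\sinh^2 s\,\sin^2(\alpha/2),$$
which, via the half-angle identity $\cosh\tilde d=1+2\sinh^2(\tilde d/2)$, simplifies to the clean form $\sinh(\tilde d/2)=\sinh s\,\sin(\alpha/2)$. Combining $s\le T$, $\alpha\le\theta_{T,r}$ with the defining relation $\sin(\tfrac12\theta_{T,r})=\sinh(\tfrac12 r)/\sinh T$ yields
$$\sinh(\tilde d/2)\le\sinh T\cdot\sin(\tfrac12\theta_{T,r})=\sinh(\tfrac12 r),$$
so $\tilde d\le r$, and therefore $\dgt(Q,\tilde\gamma)\le\dgt(Q,P')\le r$. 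Hence $Q\in\mathcal{T}_r(\tilde\gamma)$, proving \eqref{g.5}.

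No serious obstacle is anticipated: the only real point of care is the half-angle manipulation that aligns the hyperbolic law of cosines with the particular form $\sinh(\tfrac12 r)/\sinh T$ used to define $\theta_{T,r}$, and this is what makes Toponogov's bound collapse exactly to the desired inequality $\tilde d\le r$.
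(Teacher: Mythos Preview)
Your proof is correct and uses the same two ingredients as the paper's: Toponogov's hinge comparison with the model space $\mathbb{H}^2$ (valid since $K\ge -1$), and the hyperbolic trigonometric identity $\sinh(\tilde d/2)=\sinh s\,\sin(\alpha/2)$ for an isoceles hinge. The organizational difference is that you apply the comparison \emph{directly} to an arbitrary $Q\in C(\theta_{T,r};T)$, choosing the companion point $P'=\tilde\gamma(\pm s)$ on $\tilde\gamma$ at the \emph{same} distance $s=d_{\tilde g}(P_0,Q)\le T$, whereas the paper first reduces to points at distance exactly $T$ (via the monotonicity $T\mapsto\theta_{T,r}$), then runs a maximality argument: it defines $\Theta_{T,r}$ as the largest aperture for which the cone-shell $\Sigma(T;\theta)$ stays in the tube, finds an extremal $Q$ with $d_{\tilde g}(Q,\tilde\gamma)=r$, and uses Toponogov to show $\Theta_{T,r}\ge\theta_{T,r}$. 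Your direct route is shorter and avoids the extremal step; the paper's contrapositive-style argument makes the role of the threshold angle $\theta_{T,r}$ slightly more transparent. Both derive the same identity, the paper via the perpendicular bisector of the isoceles comparison triangle, you via the hyperbolic law of cosines and a half-angle manipulation.
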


To prove Proposition~\ref{propg.1} we shall use a couple of special cases for the Gauss-Bonnet theorem (see \cite{doCarmo})  concerning the sum of the interior angles
$\alpha_j$ for geodesic quadrilaterals $Q$ and geodesic triangles ${\mathcal T}$ in $({\mathbb R}^2,g)$.  In the first case we define the ``defect'' of $Q$, $\text{Defect }Q$,
to be $2\pi$ minus the sum of the four interior angles at the vertices, and in the case of ${\mathcal T}$, we define  $\text{Defect }{\mathcal T}$ to be $\pi$ minus the sum of its
three interior angles, as shown in Figure~\ref{fig1}.

Then, by the Gauss-Bonnet theorem we have
\begin{gather*}
\text{Defect }Q=-\int_Q K\, dV
\\
\text{Defect }{\mathcal T}=-\int_{\mathcal T} K\, dV.
\end{gather*}

\begin{figure}[!ht]
  \centering
    \makebox[\textwidth][c]{\includegraphics[width=1.2\textwidth]{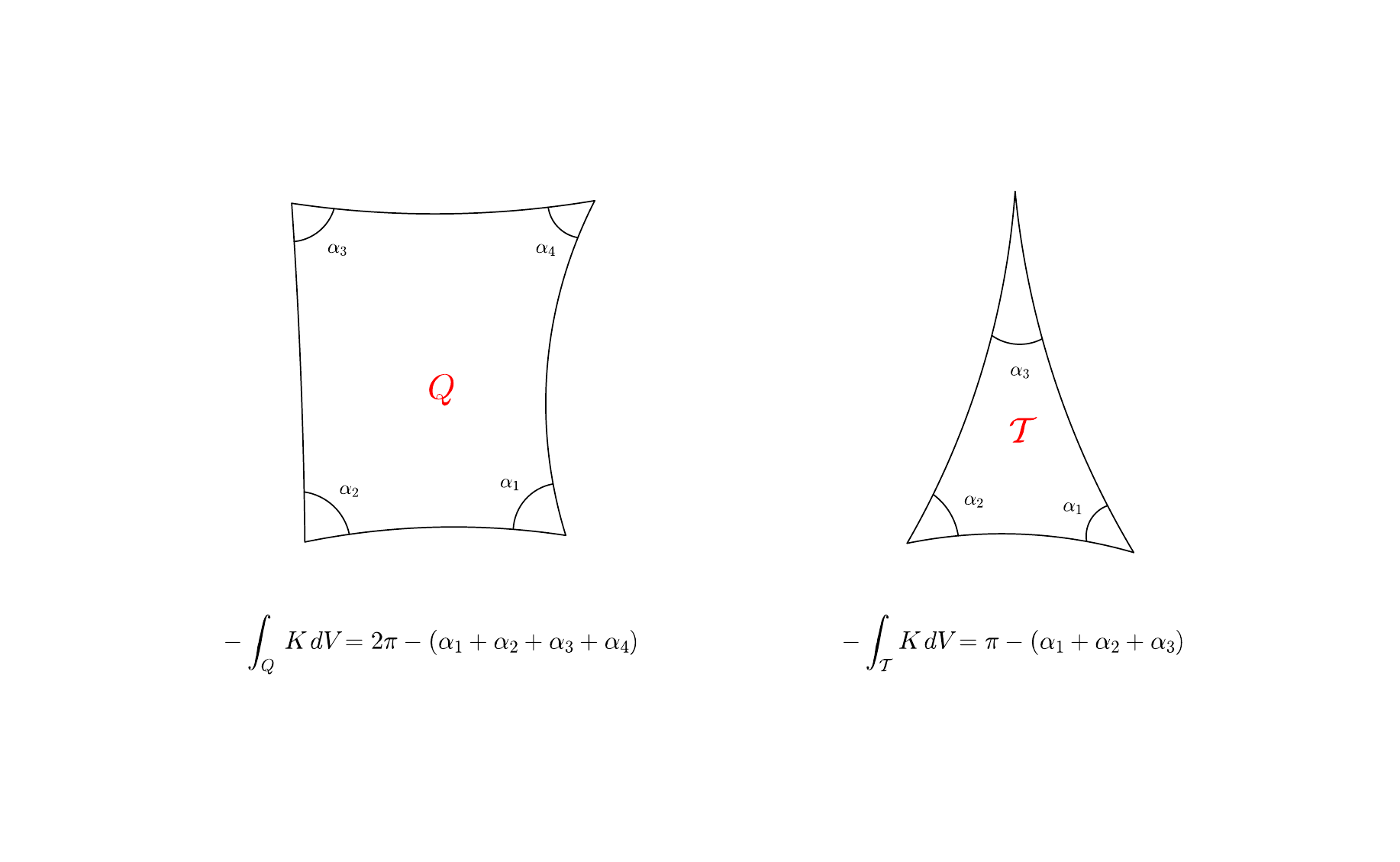}}
      \caption{Gauss-Bonnet Theorem}
      \label{fig1}
\end{figure}

\begin{proof}[Proof of Proposition~\ref{propg.1}]
Suppose that,  for a given $(t_0,s_0)\in [-1/2,1/2]\times [-1/2,1/2]$, \eqref{g.2} is valid.  By symmetry it suffices to show
that the conclusion in \eqref{g.4} is valid if we assume that $t\in [-1/2,1/2]\, \backslash \, (t_0-\la^{-\e_0},\, t_0+\la^{-\e_0})$ and
$|s|\le 1/2$.  If $s\ne s_0$ there are two cases as shown in Figure~\ref{fig2}:  Either the geodesic segment connecting 
$\tilde \gamma_1(t_0)$ and $\tilde \gamma_2(s_0)$ and the one connecting $\tilde \gamma_1(t)$ and $\tilde \gamma_2(s)$ do not intersect
or intersect.  In the first case we obtain a geodesic quadrilateral $Q$ with vertices $\tilde \gamma_1(t_0), \, \tilde \gamma_1(t), \, \tilde\gamma_2(s_0)$
and $\tilde\gamma_2(s)$, while in the other case we obtain two geodesic triangles using those four points  and the intersection
point of the aforementioned geodesic segments.  To reach this conclusion we are using the fact that since we are assuming $K\le 0$,
two geodesics in $({\mathbb R}^2,\tilde g)$ are disjoint or intersect at exactly one point by the Cartan-Hadamard theorem.

In the first case, let $\alpha_{t_0}, \alpha_t,\alpha_{s_0}$ and $\alpha_s$ denote the interior angles of the geodesic quadrilateral $Q$
at vertices $\tilde \gamma_1(t_0), \, \tilde \gamma_1(t), \, \tilde \gamma_2(s_0)$ and $\tilde \gamma_2(s)$, respectively.  Note that $\alpha_t=\theta_t$
if $0<\alpha_t\le \pi/2$ and $\theta_t=\pi-\alpha_t$ if $\alpha_t\in (\pi/2,\pi)$, etc.

As we mentioned before, by the Gauss-Bonnet theorem
\begin{equation}\label{g.6}
\text{Defect }Q=2\pi -\bigl(\alpha_{t_0}+\alpha_t+\alpha_{s_0}+\alpha_s\bigr)=-\int_QK\, dV.
\end{equation}
As in Figure~\ref{fig2}, if we consider the geodesic ball, $B_r$, $r=\la^{-\e_0}/100$, which is tangent to $\tilde \gamma_1$ at $\tilde \gamma_1((t+t_0)/2)$ and
on the same side of $\tilde \gamma_1$ as $Q$, it follows that, if $\la$ is larger than a fixed constant depending on the metric, we have
$B_r\subset Q$ if $\alpha_t\notin (0,\pi/4)\cup (3\pi/4,\pi)$.  We may make this assumption since otherwise we have $\pi/2 -\theta_t\ge \pi/4 \gg \la^{-1/4}$.
Thus, in the nontrivial case where $\alpha_t\notin (0,\pi/4)\cup (3\pi/4,\pi)$, since $K\le0$, we have for large enough $\la$
$$\text{Defect }Q\ge -\int_{B_r}K\, dV \ge \delta \la^{-N\e_0}=\delta \la^{-1/5},$$
for some $\delta>0$ by \eqref{g.1}.  Since we are assuming \eqref{g.2} we must have\linebreak $|\pi/2-\alpha_{s_0}|, \, |\pi/2-\alpha_{t_0}|\le \la^{-1/3}$ and
therefore
$$(\pi/2-\alpha_t)+(\pi/2-\alpha_s)\ge \delta\la^{-1/5}-2\la^{-1/3}\ge \tfrac\delta2 \la^{-1/5} \quad \text{if } \, \, \, \la \gg 1,$$
which of course implies that
\begin{equation}\label{g.7}
\max\bigl( \, \pi/2-\theta_t,\pi/2-\theta_s\, \bigr)\ge \la^{-1/4},
\end{equation}
if $\la$ is larger than a fixed constant which is independent of our two geodesic segments $\tilde \gamma_1$ and $\tilde \gamma_2$.

As noted before, the other case where $s\ne s_0$ and $|t-t_0|\ge \la^{-\e_0}$ is where the geodesics connecting
$\tilde \gamma_1(t_0)$ and $\tilde \gamma_2(s_0)$ and the one connecting $\tilde \gamma_1(t)$ and $\tilde \gamma_2(s)$ intersect at a point $P$.  Then
as in the second case Figure~\ref{fig2} we shall consider the geodesic triangle ${\cal T}$ with vertices $\tilde \gamma_1(t_0), \,\tilde \gamma_1(t)$ and $P$.  If
$\alpha_{t_0}, \, \alpha_t$ and $\alpha_P$ are the corresponding interior angles for ${\mathcal T}$, as before, we may assume that
$\alpha_t\notin (0,\pi/4)\cup(3\pi/4,\pi)$, for, if not, \eqref{g.7} trivially holds.   Then, as in Figure~\ref{fig2}, if $\la$ is large enough the geodesic
ball $B_r$, $r=\la^{-\e_0}/100$, which is tangent to $\tilde \gamma_1$ at $\tilde \gamma_1((t+t_0)/2)$ and on the same side as ${\mathcal T}$ must be contained
in ${\mathcal T}$ if $\la$ is larger than a fixed constant depending on the metric.  Therefore, by the Gauss-Bonnet theorem
$$\pi-(\alpha_{t_0}+\alpha_t+\alpha_P)=-\int_{\mathcal T} K\, dV\ge -\int_{B_r} K\, dV \ge \delta \la^{-N\e_0}=\delta \la^{-1/5}.$$
Therefore, by our assumption \eqref{g.2} and a variation of the earlier argument
$$\pi/2-(\alpha_t+\alpha_P) \ge \tfrac\delta2\la^{-1/5} \quad \text{if } \, \, \la\gg 1.$$
Since $\alpha_P>0$ this means that we must have
$$\theta_t=\alpha_t\in (0,\tfrac\pi2 -\tfrac\delta2 \la^{-1/5})\subset (0,\tfrac\pi2-\la^{-1/4}) \quad \text{if } \, \, \la\gg 1,$$
and so \eqref{g.7} is valid in this case as well.

The one remaining case to consider is where $s=s_0$ and $|t-t_0|\ge \la^{-\e_0}$.  One obtains \eqref{g.7} for this case as well by using
this argument but with ${\mathcal T}$ now being the geodesic triangle with vertices $\tilde \gamma_1(t_0)$, $\tilde \gamma_1(t)$ and $\tilde \gamma_2(s_0)$, which
completes the proof.\end{proof}


\begin{figure}[ht]

\makebox[\textwidth][c]{\includegraphics[width=1.2\textwidth]{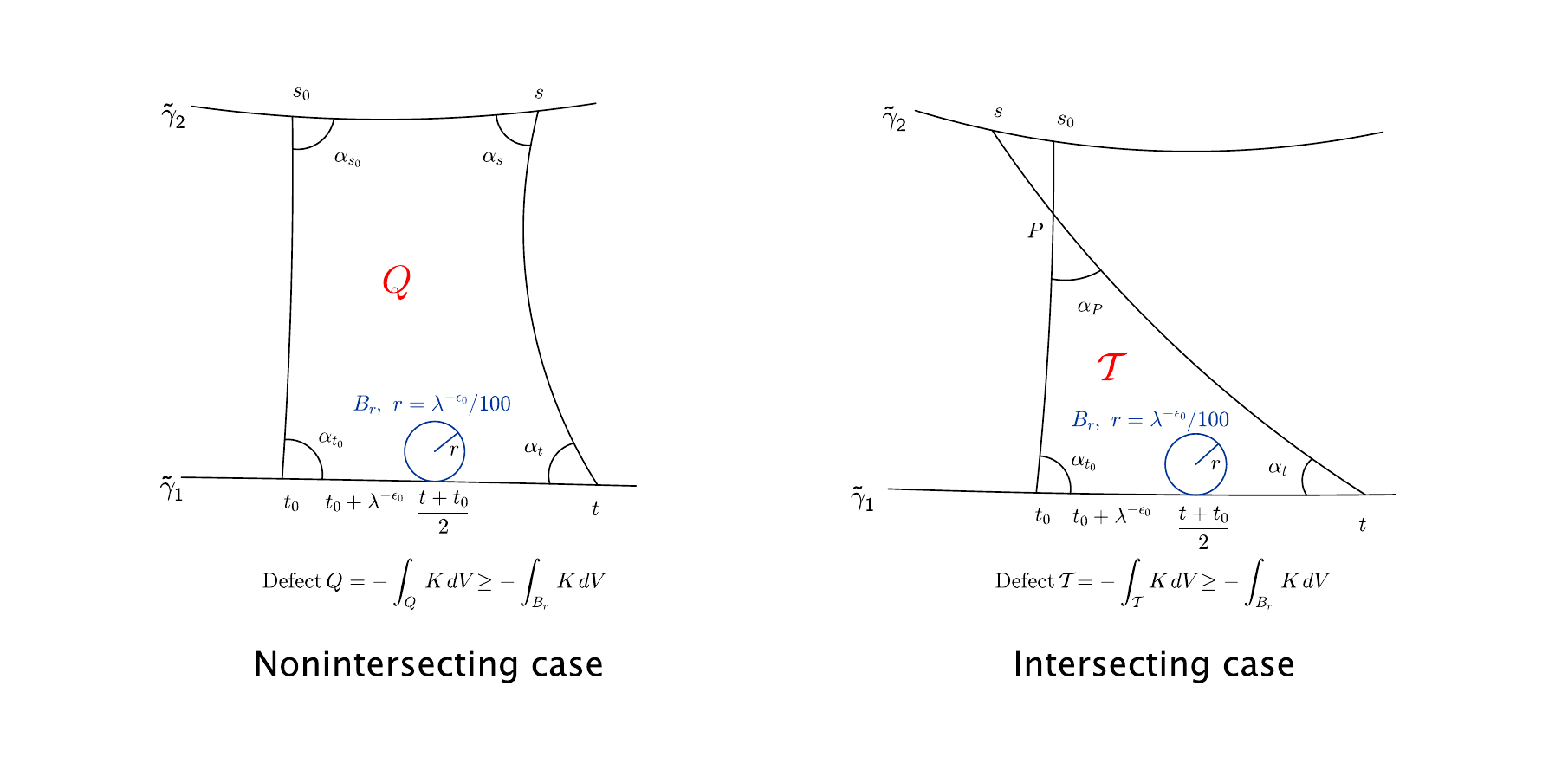}}
\caption{Two cases}
 \label{fig2}
\end{figure}

Even though Proposition~\ref{topprop} was proved in \cite{BSTop}, for the sake of completeness we shall give its simple proof now.

\begin{proof}[Proof of  Proposition~\ref{topprop}]
Recall that we are trying to show that
$$C(\theta_{T,r};T)\subset {\mathcal T}_r(\tilde \gamma), \quad \text{if } \, \sin \tfrac12\theta_{T,r} =\frac{\sinh \tfrac12 r}{\sinh T}.$$
We shall work in geodesic normal coordinates about $P_0$ and we may assume that, in these coordinates, $\tilde \gamma=
\{(t,0): \, t\in \R\}$.  $C(\theta; T)$ then is the intersection of the geodesic ball of radius $T>0$ about our origin
with the cone of aperture $\theta$ about $\tilde \gamma$.
Also,
${\mathcal T}_r(\tilde \gamma)$ denotes the closed tube of fixed radius $0<r<1$ about $\tilde \gamma$.

Since, for fixed $r$,  $T\to \theta_{T,r}$ is monotonically decreasing, it suffices to show that a point $Q$ with coordinates $T\omega$, $\omega\in S^{n-1}$,
belongs to ${\mathcal T}_r(\tilde \gamma)$ if the angle, $\sphericalangle (\omega,(1,0))$, is $\le \theta_{T,r}$.  In other words, to obtain
\eqref{g.5}, it suffices to show that
\begin{equation}\label{K6'} 
\Sigma(T;\theta_{T,r})\subset {\mathcal T}_r(\tilde \gamma),
\end{equation}
if $\Sigma(T,\theta)$ denotes all points $Q$ with coordinates $T\omega$ satisfying $\sphericalangle (\omega,\1)\le \theta$, with $\1 =(1,0)$.

Clearly $\Sigma(T;\theta)\subset {\mathcal T}_r(\tilde \gamma)$ when $\theta$ is very small (depending on $T$).  So choose the maximal
$\Theta_{T,r}\le \pi/2$ so that $\Sigma(T;\theta)\subset {\mathcal T}_r(\tilde \gamma)$ when $0<\theta<\Theta_{T,r}$.  It follows
that there must be a point $Q$ with coordinates,   $T\omega_0$, satisfying $\sphericalangle(\omega_0,\1)=\Theta_{T,r}$ and $d_{\tilde g}(Q,\tilde \gamma)=r$.
Also, \eqref{K6'} is valid when $\theta_{T,r}$ is replaced by $\Theta_{T,r}$.  So we would have \eqref{g.5} and be done if we could show that
\begin{equation}\label{K6''} 
\Theta_{T,r} \ge \theta_{T,r}.
\end{equation}

At this point, we shall use Toponogov's theorem.  First consider the geodesic triangle,
$\triangle^{\tilde g}_{\Theta_{T,r}}$, in $({\mathbb R}^2,\tilde g)$ with vertices $Q$
and the point with coordinates $0$ and the  point $P$ with coordinates $(T,0)$.  It is an isosceles triangle since
the geodesics connecting the point with coordinates $0$ with $P$ and $Q$, respectively, each have length $T$.  The point
$P$ lies on $\tilde \gamma$ and hence if $\tilde \gamma_{opp}$ is the third side of our geodesic triangle,
which connects $P$ and $Q$, we must have that its length, $\ell(\tilde \gamma_{opp})$ satisfies
 $$\ell(\tilde \gamma_{opp})=d_{\tilde g}(P,Q)\ge r,$$
 since, as we pointed out before, we must have  
$d_{\tilde g}(Q,\tilde \gamma)=r$.  The angle at the vertex whose coordinates are the origin, by construction, is $\Theta_{T,r}$,
and the two sides passing through it each have length $T$.
The third side of our isosceles triangle, $\tilde \gamma_{opp}$, is called a ``Rauch hinge''.

Consider as well, an isosceles triangle, $\triangle^{{\mathbb H}^2}_{\Theta_{T,r}}$, in two-dimensional hyperbolic space, ${\mathbb H}^2$, having two sides of equal
length $T$, angle $\Theta_{T,r}$ at the associated vertex and  ``Rauch hinge" $\gamma_{opp}$, with length
$\ell(\gamma_{opp})$.  By Toponogov's theorem (see \cite[Theorem 2.2 (B)]{ChEb}), since we are assuming that the
sectional curvatures of $({\mathbb R}^2,\tilde g)$ satisfy $-1\le K\le 0$, we must have
$$\ell(\gamma_{opp})\ge \ell(\tilde \gamma_{opp})\ge r.$$
By properties of isosceles triangles in ${\mathbb H}^2$, the ray bisecting the triangle at the vertex spanned by the two
sides of equal length $T$ must intersect the Rauch hinge, $\gamma_{opp}\in \triangle^{{\mathbb H}^2}_{\Theta_{T,r}}$, orthogonally at its midpoint.  Consequently,
by hyperbolic trigonometry, we must have
$$\sin\tfrac12 \Theta_{T,r} =\frac{\sinh(\ell(\gamma_{opp})/2)}{\sinh T}\ge \frac{\sinh \tfrac12 r}{\sinh T}= \sin\tfrac12 \theta_{T,r}.$$
Thus, \eqref{K6''} is valid and the proof of Proposition~\ref{topprop} is complete.
\end{proof}
\bigskip
 
 \newsection{Stationary phase bounds}
 
 Let us now collect the bounds for oscillatory integrals that we shall use to prove our bounds for smoothly localized integrals over geodesic segments and period integrals.
 These are more precise variations of the   ones used in the earlier work of Chen and the first author in \cite{CSPer}.
 
 The first concerns estimates  for one-dimensional oscillatory integrals with natural lower bounds for first derivatives of
 the phase function.

\begin{lemma}\label{lemmas.1}  Suppose that $\phi\in C^\infty((-1,1))$, is real valued
and that $a\in C^\infty_0({\mathcal I})$, where ${\mathcal I}\subset (-1,1)$ is 
an open interval and set
\begin{equation}\label{s.1}
I(\la)=\int e^{i\la \phi(t)} \, a(t)\, dt, \quad \la \ge1.
\end{equation}
Suppose that 
\begin{equation}\label{s.2}
|\partial_t \phi|\ge \la^{-1/2+\delta}, \quad \text{on  } \, {\mathcal I}
\end{equation}
and suppose 
further that for $0\le j\le N =\left \lceil{4\delta^{-1}}\right \rceil$.
\begin{equation}\label{s.3}
| \partial^j_t \phi' |\le  \lambda^{\delta/2}, \quad
\text{and } \, \, |\partial^j_t a|\le C_j \la^{j/2} \, \, \, \text{on } \, \, {\mathcal I}.
\end{equation}
Then if $0< \delta< 1/2$ 
\begin{equation}\label{s.4}
|I(\la)|\le C\la^{-2},
\end{equation}
where $C$ depends only on $\delta$ and the $C_j$.
\end{lemma}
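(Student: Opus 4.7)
The plan is to use iterated non-stationary phase integration by parts, pushed far enough to overcome the weakness of the lower bound $|\phi'|\ge \lambda^{-1/2+\delta}$. Introduce the first-order operator $L = (i\lambda\phi')^{-1}\partial_t$, well-defined on $\mathcal I$ by \eqref{s.2} and satisfying $L(e^{i\lambda\phi}) = e^{i\lambda\phi}$; its formal adjoint with respect to Lebesgue measure is $L^* f = -(i\lambda)^{-1}(f/\phi')'$. Since $a\in C^\infty_0(\mathcal I)$, integrating by parts $N$ times produces no boundary terms and yields
$$I(\lambda) = \int e^{i\lambda\phi}\,(L^*)^N a\,dt.$$
The problem is therefore reduced to an $L^\infty$ estimate for $(L^*)^N a$ on $\mathcal I$.

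I would carry this out by recording, via a short induction on $k$, the normal form
$$(L^*)^k a = \lambda^{-k}\sum_{\alpha} c_\alpha\,(\phi')^{-k-r_\alpha}\,\partial_t^{\beta_0^\alpha}a\prod_{i=1}^{r_\alpha}\partial_t^{\alpha_i^\alpha}\phi',$$
the sum running over finitely many multi-indices with $r_\alpha\le k$, $\alpha_i^\alpha\ge 1$, and $\beta_0^\alpha + \sum_i \alpha_i^\alpha = k$. In particular $\beta_0^\alpha\le k$ and every $\alpha_i^\alpha\le k$, so for $k\le N$ the hypotheses \eqref{s.3} apply to each factor. Inserting the atomic bounds $|\phi'|^{-1}\le \lambda^{1/2-\delta}$, $|\partial^{\beta_0}a|\le C\lambda^{\beta_0/2}$, and $|\partial^{\alpha_i}\phi'|\le \lambda^{\delta/2}$, together with the constraint $\beta_0+r\le k$ forced by $\alpha_i\ge 1$, a direct exponent count gives $|(L^*)^k a|\le C_k\,\lambda^{-k\delta}$ on $\mathcal I$ for every $k\le N$.

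Taking $k=N=\lceil 4\delta^{-1}\rceil$ produces $|(L^*)^N a|\le C\lambda^{-N\delta}\le C\lambda^{-4}$; since $\mathcal I\subset(-1,1)$ has length at most $2$, the triangle inequality yields $|I(\lambda)|\le C\lambda^{-4}\le C\lambda^{-2}$ for $\lambda\ge 1$, which is \eqref{s.4}. The main obstacle is purely bookkeeping: one must verify the inductive step for the normal form and check that no derivative of $\phi'$ or $a$ of order beyond $N$ is ever needed. Both follow directly from the structure of $L^*$, since each application of $L^*$ differentiates exactly once and either hits an existing factor or creates a fresh $\phi^{(j)}$, preserving the relations $\beta_0+\sum\alpha_i=k$ and $r\le k$. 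The choice $N\delta\ge 4$ is mildly wasteful (any $N\delta\ge 2$ would suffice for \eqref{s.4}), but the margin absorbs without fuss all the small losses coming from the quotient rule at each step.
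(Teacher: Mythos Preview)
Your proof is correct and follows essentially the same route as the paper: both introduce $L=(i\lambda\phi')^{-1}\partial_t$, integrate by parts $N$ times, and then bound $(L^*)^Na$ pointwise via a structural decomposition into monomials in $(\phi')^{-1}$, derivatives of $a$, and derivatives of $\phi'$. The only difference is bookkeeping: the paper writes each monomial with the fixed power $(\phi')^{-2N+j}$ (allowing factors $\partial_t^{\beta_\nu}\phi'$ with $\beta_\nu=0$ to pad), which leads to the slightly weaker bound $|(L^*)^Na|\lesssim\lambda^{-N\delta/2}$, whereas your tighter normal form with $\alpha_i\ge1$ and the exact constraint $\beta_0+\sum\alpha_i=k$ gives $|(L^*)^Na|\lesssim\lambda^{-N\delta}$; both suffice for \eqref{s.4} with $N=\lceil 4\delta^{-1}\rceil$.
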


As the following result says, we also can  obtain favorable estimates for one-dimensional oscillatory integrals if we do not have
the above hypothesis concerning lower bounds
for the  first derivatives of the phase, but rather have related lower  bounds for second derivatives.

\begin{lemma}\label{lemmas.2}
Set
\begin{equation}\label{s.5}
J(\la)=\int
e^{i\la \varphi(t)} \, b(t)\, dt, \quad \la\ge 1,
\end{equation}
where $b\in C^\infty_0({\mathcal I})$, where
${\mathcal I}$ is as above, and that $\varphi \in C^\infty( (-1,1))$ is real valued.
 Suppose further that $0\in {\mathcal I}$,
\begin{equation}\label{s.6}
|\varphi'(0)|\le \la^{-1/2+\delta}, \quad \text{and } \, \, \la^{-\delta/2}\le |\varphi''(t)|\le \la^{\delta/2},
\, \, \, t\in {\mathcal I},
\end{equation}
and that
\begin{equation}\label{s.7}
|b|\le 1, \quad |b'|\le \la^{1/2}.
\end{equation}
Then if $0<\delta\le 1/4$   there is a constant 
$C=C_\delta$ so that
\begin{equation}\label{s.8}
|J(\la)|\le C\la^{-1/2+2\delta}.
\end{equation}
\end{lemma}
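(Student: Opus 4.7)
The plan is to exploit the fact that the lower bound $|\varphi''|\ge\la^{-\delta/2}>0$ makes $\varphi'$ strictly monotone on ${\mathcal I}$ (since $\varphi''$ is continuous and nonvanishing, it keeps a constant sign on the interval), so the oscillatory integral is controlled by a single integration by parts once one cuts out a small neighborhood of the near-critical point. Combining monotonicity with the smallness $|\varphi'(0)|\le\la^{-1/2+\delta}$ gives, for $t\in{\mathcal I}$,
\[
|\varphi'(t)|\ge |\varphi'(t)-\varphi'(0)|-|\varphi'(0)|\ge\la^{-\delta/2}|t|-\la^{-1/2+\delta},
\]
and hence $|\varphi'(t)|\ge\tfrac12\la^{-\delta/2}|t|$ whenever $|t|\ge R:=\la^{-1/2+2\delta}$ and $\la$ is large enough that $\la^{\delta/2}\ge 2$.

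Accordingly, I would introduce a smooth cutoff $\chi\in C^\infty(\mathbb R)$ with $\chi=0$ on $[-1,1]$ and $\chi=1$ off $[-2,2]$, and split $J(\la)=J_1(\la)+J_2(\la)$ where $J_1$ is the part with factor $1-\chi(t/R)$. The trivial estimate $|b|\le 1$ immediately gives $|J_1(\la)|\le 4R=4\la^{-1/2+2\delta}$, already matching the target exponent.

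For $J_2(\la)$, the plan is one integration by parts using $e^{i\la\varphi}=(i\la\varphi')^{-1}\frac{d}{dt}e^{i\la\varphi}$. No boundary terms appear since $b\in C_0^\infty({\mathcal I})$, and the resulting integrand splits into three pieces, with factors $b'\chi/\varphi'$, $b\chi'/(R\varphi')$, and $b\chi\varphi''/(\varphi')^2$. On the support of $\chi(t/R)$ one has $|\varphi'|\ge\tfrac12\la^{-\delta/2}|t|$, together with the hypotheses $|b'|\le\la^{1/2}$, $|\varphi''|\le\la^{\delta/2}$. A direct check (using $\int_R^1 t^{-2}\,dt\le R^{-1}$) shows the $\chi'$ and $\varphi''$ pieces are both $O(\la^{-1/2-\delta/2})$, much smaller than needed. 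The remaining $b'$ piece, coming from the large derivative of the amplitude, produces a logarithm,
\[
\frac1\la\int\frac{|b'(t)|\,\chi(t/R)}{|\varphi'(t)|}\,dt\le\frac{C}{\la}\la^{1/2+\delta/2}\int_R^1\frac{d\tau}{\tau}\le C\la^{-1/2+\delta/2}\log\la,
\]
and since $\log\la\le C_\delta\la^{3\delta/2}$ for $\la\gg 1$, this is absorbed into $C_\delta\la^{-1/2+2\delta}$. Combining the two estimates gives \eqref{s.8}.

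The main obstacle is precisely the weak bound $|b'|\le\la^{1/2}$: it rules out iterating the integration by parts, since a second pass would cost a factor $\la^{1/2}$ against a gain of only $\la^{-1}$, strictly worsening the estimate. One has to tune $R$ to balance the trivial estimate on $J_1$ against the $\log\la$-inflated estimate on $J_2$, and $R=\la^{-1/2+2\delta}$ is the choice that lets both sides fit within the allowed exponent, using the polynomial slack provided by $2\delta>\delta/2$.
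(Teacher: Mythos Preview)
Your proof is correct and follows essentially the same approach as the paper: split off a neighborhood of radius $R=\la^{-1/2+2\delta}$ about $0$, bound that piece trivially, and handle the rest by a single integration by parts using the lower bound $|\varphi'(t)|\ge\tfrac12\la^{-\delta/2}|t|$ obtained from the mean value theorem together with $|\varphi'(0)|\le\la^{-1/2+\delta}$. The three resulting terms and their estimates (including the $\log\la$ from the $b'$ contribution, absorbed via $\log\la\le C_\delta\la^{3\delta/2}$) match the paper's computation line for line.
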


\begin{proof}[Proof of Lemma~\ref{lemmas.1}]
Note that 
$$Le^{i\la \phi(t)}=e^{i\la\phi(t)}, \quad
\text{if } \, \, L=\frac1{i\la \phi'(t)}\frac{d}{d t}.
$$
Therefore,  if $L^*$ denotes the adjoint of $L$,  for every $N=1,2,3,\dots$,
\begin{equation}\label{s.9}
I(\la)=\int_{-\infty}^\infty e^{i\la \phi(t)} \bigl((L^*)^Na\bigr)(t) \, dt.\end{equation}
By a simple induction argument, one shows that $(L^*)^Na$ is a finite linear
combination of terms of the form
\begin{multline*}
\la^{-N}(\phi')^{-2N+j} \Bigl(\frac{d}{dt}\Bigr)^j a \cdot
\Bigl(\frac{d}{dt}\Bigr)^{\beta_1}\phi' \cdots \Bigl(\frac{d}{dt}\Bigr)^{\beta_K}\phi',
\\
\text{where } \quad j, \, \, \beta_\nu \in \{0,1,2,\dots, N\},
\quad \text{and } \, \, K\le N.
\end{multline*}
Therefore by \eqref{s.2} and \eqref{s.3} each of these terms is bounded by
$$
C_N \la^{-N}\la^{(2N-j)(1/2-\delta)}\la^{j/2} \la^{N\delta/2}
\le C_N\la^{-N\delta/2}.
$$
This and \eqref{s.9} gives us \eqref{s.4}.
%
\end{proof}

\begin{proof}[Proof of Lemma~\ref{lemmas.2}]
Fix $\rho\in C^\infty(\R)$ satisfying
\begin{equation}\label{s.10}
|\rho|\le 1, \, \, \rho(t)=1, \, \, |t|\le1, \, \, \text{and } \, \, \rho(t)=0, \, \, |t|\ge 2.
\end{equation}
Clearly
$$\Bigl | \int e^{i\la \varphi(t)} \, b(t) \, \rho(\la^{1/2-2\delta}t)
\, dt \, \Bigr|\le 4\la^{-1/2+2\delta},$$
and so it suffices to show that
$$\widetilde J(\la)=\int e^{i \la\varphi(t)} \, b(t) 
(1-\rho(\la^{1/2-2\delta}t))
\, dt$$
satisfies the bounds in \eqref{s.8}.

If we integrate by parts as in the proof of Lemma~\ref{lemmas.1} and use \eqref{s.7} and \eqref{s.10}, we see that
\begin{multline}\label{s.11}|\widetilde  J(\la)|
\le \la^{-1} \int_{\la^{-1/2+2\delta}\le |t|\le 1}
\Bigl(\, |\varphi'(t)|^{-1} \bigl| \, \tfrac{d}{dt}\bigl[b(t)(1-\rho(\la^{1/2-2\delta}t))\bigr]\bigr| 
\\ +
|b(t)|\, |\varphi'(t)|^{-2}|\varphi''(t)|\, \Bigr)\, dt.
\end{multline}

By the mean value theorem and \eqref{s.6}, for large enough $\la\ge1$, we have
\begin{equation*}
|\varphi'(t)|\ge \la^{-\delta/2}|t|-\la^{-1/2+\delta}\ge \frac12 \la^{-\delta/2}|t|,
\quad
\text{if } \, \,  t\in {\mathcal I} \cap \{t:  \la^{-1/2+2\delta}\le |t|\le 1\}.
\end{equation*}
Therefore, since we are assuming $\text{supp }b\subset{\mathcal I}$, by the second part of \eqref{s.6} and by \eqref{s.7}
\begin{align*}
| \widetilde J(\la)|&\lesssim \la^{-1} \int_{\la^{-1/2+2\delta}}^1\bigl(\, 
\la^{\delta/2} t^{-1} \bigl[\,  \la^{1/2} +t^{-1}\bigr] \, + \, \la^{3\delta/2} t^{-2}\, \bigr)\, dt
\\
&\lesssim \la^{-1/2+\delta/2}\ln \la + \la^{-1/2-3/2\delta} + \la^{-1/2-\delta/2} 
\\
&\lesssim \la^{-1/2+2\delta},
\end{align*}
as desired if $\la \gg 1$.
\end{proof}

We can combine Lemma~\ref{lemmas.1} and  Lemma~\ref{lemmas.2} to obtain the following.

\begin{proposition}\label{propositions.3}  Suppose that $\phi\in C^\infty(\R)$ is real valued and that $a\in C^\infty_0({\mathcal I})$, where ${\mathcal I}\subset (-1/2,1/2)$ is an open interval.  Suppose that
for some $0<\delta<1/2$
\begin{equation}\label{s.12}
\la^{-\delta/2}\le |\phi''(t)|, \quad t\in {\mathcal I}.
\end{equation}
Suppose further that for $0\le j\le N=\left \lceil{4\delta^{-1}}\right \rceil$
\begin{equation}\label{s.13}
|\partial^j_ta(t)|\le C_j\la^{j/2}
\end{equation}
and that
\begin{equation}\label{s.14}
|\partial^j_t \phi'|\le \la^{\delta/2}, \quad t\in{\mathcal I}.
\end{equation}
Then
\begin{equation}\label{s.15}
\Bigl|\, \int e^{i\la \phi(t)} \, a(t)\, dt \, \Bigr|\le C\la^{-1/2+2\delta},
\end{equation}
where $C$ depends only on $\delta$ and the above constants $C_j$, $j\le \left \lceil{4\delta^{-1}}\right \rceil$.
\end{proposition}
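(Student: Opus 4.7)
The plan is a clean dichotomy that reduces the proposition to the two lemmas just established. By hypothesis \eqref{s.12}, $\phi''$ has a definite sign on $\mathcal{I}$, so $\phi'$ is strictly monotone there. Consequently exactly one of the following alternatives holds: either $|\phi'(t)| \ge \lambda^{-1/2 + \delta}$ for every $t \in \mathrm{supp}(a)$, or there is some $t_0 \in \mathrm{supp}(a) \subset \mathcal{I}$ with $|\phi'(t_0)| \le \lambda^{-1/2 + \delta}$. These two alternatives are precisely what Lemma~\ref{lemmas.1} and Lemma~\ref{lemmas.2} are designed to handle.

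In the first, non-stationary case, Lemma~\ref{lemmas.1} applies directly with the same $\delta$: the assumption $|\phi'| \ge \lambda^{-1/2+\delta}$ is \eqref{s.2}, and the derivative estimates \eqref{s.13} and \eqref{s.14} are precisely \eqref{s.3}. One concludes $|I(\lambda)| \le C\lambda^{-2}$, much stronger than the target bound \eqref{s.15}. In the second, near-stationary case, I would translate so that the near-critical point sits at the origin: set $s = t - t_0$, $\tilde\phi(s) = \phi(s + t_0)$, and $\tilde a(s) = a(s + t_0)$. Since $|t_0| < 1/2$ and $\mathcal{I} \subset (-1/2, 1/2)$, the translated interval $\mathcal{I} - t_0$ lies in $(-1, 1)$ and contains $0$, as Lemma~\ref{lemmas.2} requires. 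By construction $|\tilde\phi'(0)| = |\phi'(t_0)| \le \lambda^{-1/2+\delta}$; the lower bound $|\tilde\phi''| \ge \lambda^{-\delta/2}$ is just \eqref{s.12}; and the upper bound $|\tilde\phi''| \le \lambda^{\delta/2}$ is the $j = 0$ case of \eqref{s.14}. After rescaling $\tilde a$ by $1/\max(C_0, C_1)$ so that the size conditions \eqref{s.7} are satisfied, Lemma~\ref{lemmas.2} yields \eqref{s.15} directly, with the constant depending only on $\delta$ and the $C_j$.

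The only subtlety is the routine bookkeeping in the second case: verifying that the upper bound on $|\phi''|$ demanded by Lemma~\ref{lemmas.2} is already present as the $j = 0$ instance of \eqref{s.14}, and that translating by $t_0$ keeps us inside the domain $(-1, 1)$ assumed by that lemma. Both facts are immediate from $\mathcal{I} \subset (-1/2, 1/2)$. There is no substantive technical obstacle; once one observes that the lower bound on $|\phi''|$ forces monotonicity of $\phi'$ and hence makes the dichotomy exhaustive, the proposition is essentially the union of the two preceding lemmas.
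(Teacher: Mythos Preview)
Your proof is correct and follows essentially the same approach as the paper: split into the two cases according to whether the lower bound \eqref{s.2} on $|\phi'|$ holds throughout, apply Lemma~\ref{lemmas.1} in the first case, and translate by $t_0$ to apply Lemma~\ref{lemmas.2} in the second. Your write-up is in fact a bit more careful than the paper's, spelling out that the $j=0$ case of \eqref{s.14} supplies the upper bound on $|\phi''|$ needed in \eqref{s.6} and that the translated interval stays inside $(-1,1)$.
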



\begin{proof}To see this, we note that if \eqref{s.2} is valid we can replace \eqref{s.12} by the stronger bounds in \eqref{s.4}.  For the other case, where
\eqref{s.2} is not valid, there must be a point $t_0\in {\mathcal I}$ where $|\phi'(t_0)|\le \la^{-1/2+\delta}$.  We then get \eqref{s.15} from
\eqref{s.8} if we let the phase function $\varphi$ in Lemma~\ref{lemmas.2} be $\phi(t-t_0)$ and the bump function
$b\in C^\infty_0({\mathcal I}-\{t_0\})$ be $a(t-t_0)$, completing the proof.
\end{proof}

\newsection{Kernel bounds}

To be able to use the results from the last two sections to prove Proposition~\ref{prop2.1} and thus complete the proof of Theorem~\ref{mainthm}
we need to calculate the kernels in \eqref{2.16}, i.e, 
\begin{equation}\label{5.1}
K_{T,\la}(x,y)=\int \bigl(1-\beta(\tau)\bigr)\Hat \chi(\tau/T) e^{i\tau \la} \bigl(\cos \tau \Pe\bigr)(x,y) \, d\tau.
\end{equation}
Here, since all the calculations from now on will be taking place in the universal cover, to simplify the notation, we are setting $\Delta=\Delta_{\tilde g}$.  Also,
in what follows $\Delta_x^N$ and $\Delta_y^N$ will denote $N$ powers of the $\Delta_{\tilde g}$ with respect to the $x$ and $y$ variables,
respectively.

Recall that the bump function $\beta$ in \eqref{5.1} is supported in $(-4,4)$ and equals one on $[-3,3]$
and that $\Hat \chi(\tau)=0$ for $|\tau|\ge 1/2$.
Also recall that we are assuming, as in \eqref{2.2}, that $T=c\log \la$ where $c=c_M$ is a small positive
constant that will be specified later on.
Using this and the Hadamard parametrix
we shall obtain the following useful result.

\begin{proposition}\label{prop5.1}  If $d_{\tilde g}\ge1$ and $\la\gg 1$ we can write
\begin{equation}\label{5.2}
K_{T,\la}(x,y)= \la^{1/2}\sum_\pm a_\pm(T,\la; x,y) e^{\pm i\la d_{\tilde g}(x,y)} +R_{T,\la}(x,y),
\end{equation}
where
\begin{equation}\label{5.3}
|a_\pm(T,\la; x,y)|\le C,
\end{equation}
and
 if $\ell=1,2,3,\dots$ is fixed 
\begin{equation}\label{5.4}
\Delta^\ell_x a_\pm(T,\la; x,y) =O(\exp(C_\ell d_{\tilde g}(x,y)))
\end{equation}
or
\begin{equation}\label{5.5}
\Delta^\ell_y a_\pm(T,\la; x,y) =O(\exp(C_\ell d_{\tilde g}(x,y))),
\end{equation}
and
\begin{equation}\label{5.6}
|R_{T,\la}(x,y)|\le\la^{-1},
\end{equation}
provided that the constant $c>0$ in \eqref{2.2} is sufficiently small.
Also,  in this case we also have
\begin{equation}\label{5.7}
K_{T,\la}(x,y)=O(\la^{-1}), \quad \text{if } \, \, \, d_{\tilde g}(x,y)\le 1.
\end{equation}
\end{proposition}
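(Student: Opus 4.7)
The plan is to substitute the Hadamard parametrix for $\bigl(\cos\tau\Pe\bigr)(x,y)$ on the universal cover $(\Rt,\tg)$ directly into \eqref{5.1} and evaluate the resulting $\tau$-integrals. Since the nonpositive curvature of $\tg$ gives no conjugate points (Hadamard's theorem), the parametrix is globally valid: for any fixed $M$ we may write
$$\bigl(\cos\tau\Pe\bigr)(x,y)=\sum_{\nu=0}^{M}w_\nu(x,y)\,E_\nu(\tau,\dgt(x,y))+R_M(\tau,x,y),$$
where the $w_\nu$ are smooth Hadamard coefficients solving transport equations along geodesics from $y$ to $x$, the $E_\nu$ are explicit tempered distributions in $\tau$ supported on the light cone $|\tau|=\dgt(x,y)$, and $R_M$ is $C^{M'}$ with $M'\to\infty$ as $M\to\infty$. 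Because $-1\le K\le 0$ and the metric has bounded geometry, standard ODE estimates for the transport equations yield $|\partial_x^\alpha\partial_y^\beta w_\nu(x,y)|\le C_{\nu,\alpha,\beta}\,\exp(C_{\nu,\alpha,\beta}\,\dgt(x,y))$.

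Substituting and swapping the order of integration, each Hadamard term contributes a one-dimensional oscillatory integral in $\tau$ against $(1-\beta(\tau))\Hat\chi(\tau/T)e^{i\tau\la}E_\nu(\tau,r)$ with $r=\dgt(x,y)$. A Fourier-inversion argument of the type that produced \eqref{2.8}--\eqref{2.10} from the short-time half-wave kernel yields, for $r\ge1$,
$$\int\bigl(1-\beta(\tau)\bigr)\Hat\chi(\tau/T)e^{i\tau\la}\,E_\nu(\tau,r)\,d\tau=\la^{1/2}\sum_{\pm}e^{\pm i\la r}\,b^{\nu}_{\pm}(\la;r)+O(\la^{-M'}),$$
with symbols $b^{\nu}_{\pm}$ satisfying $|b^{\nu}_{\pm}(\la;r)|\le C_\nu$ uniformly in $r\ge 1$ on the relevant range. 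Setting $a_\pm(T,\la;x,y)=\sum_{\nu=0}^{M}w_\nu(x,y)\,b^{\nu}_{\pm}(\la;\dgt(x,y))$ gives the main term in \eqref{5.2}, and the bound \eqref{5.3} then follows from the uniform symbol bounds together with the fact that the leading coefficient $w_0$ is bounded on the support of the integrand. The derivative bounds \eqref{5.4}--\eqref{5.5} follow by distributing $\Delta^\ell_x$ or $\Delta^\ell_y$ across the product: every time a derivative lands on a $w_\nu$ or on a $\dgt(x,y)$ appearing inside $b^{\nu}_{\pm}$, the growth is controlled by $\exp(C_\ell\,\dgt(x,y))$ via the transport-equation estimates, and the symmetry of the parametrix in $(x,y)$ allows one to place the derivatives on whichever argument is convenient.

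The remainder $R_{T,\la}(x,y)$ absorbs both the $R_M$ smoothing contribution and the $O(\la^{-M'})$ stationary-phase errors from each Hadamard term. On the support of the integrand, the Huygens principle and the support of $\Hat\chi(\tau/T)$ force $\dgt(x,y)\le T/2=\tfrac{c}{2}\log\la$, so the exponential growth from the Hadamard coefficients is at worst $\exp(CT/2)=\la^{Cc/2}$. Choosing $M$ large enough that $M'>2$ and then $c=c_M>0$ small enough that $Cc/2<1$ makes the product $\la^{Cc/2-M'}$ bounded by $\la^{-1}$, establishing \eqref{5.6}. For the near-diagonal regime $\dgt(x,y)\le 1$ in \eqref{5.7}, the factor $(1-\beta(\tau))$ excludes $|\tau|\le 3$, keeping us away from the singular support of the wave kernel entirely; there $\bigl(\cos\tau\Pe\bigr)(x,y)$ is smooth in $\tau$ with $\tau$-derivatives bounded by $\exp(CT)$, and repeated integration by parts against $e^{i\tau\la}$ gives any power of $\la^{-1}$ once $c$ is sufficiently small.

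I expect the main obstacle to be making the exponential-growth bounds on the Hadamard coefficients, and more importantly on every term arising from iterated differentiation of $a_\pm$, \emph{quantitatively} sharp enough that the final budget $\la^{Cc/2-M'}\le \la^{-1}$ is genuinely met by a uniform choice of $c=c_M$. This is a careful bookkeeping exercise that reduces, via the standard Hadamard construction, to bounds on Jacobi fields of $\tg$ and their derivatives in geodesic normal coordinates, each of which grows at most exponentially under the hypothesis $-1\le K\le 0$.
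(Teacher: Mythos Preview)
Your approach is the paper's approach: insert the global Hadamard parametrix (B\'erard) into \eqref{5.1}, use that the leading coefficient $w_0=\alpha_0$ is $O(1)$ (G\"unther comparison, $K\le0$) while the higher $w_\nu$ grow at most exponentially in $d_{\tilde g}$, evaluate the $\tau$-integrals, and trade exponential growth against powers of $\la^{-1}$ by choosing $c$ small. Your handling of \eqref{5.4}--\eqref{5.6} and \eqref{5.7} matches the paper's.

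There is, however, a genuine gap in your argument for \eqref{5.3}. You state only $|b^\nu_\pm(\la;r)|\le C_\nu$ and then set $a_\pm=\sum_{\nu=0}^M w_\nu\,b^\nu_\pm$. With that bound alone, the terms with $\nu\ge 1$ contribute $w_\nu\,b^\nu_\pm=O(\exp(C_\nu d))\cdot C_\nu$, which is \emph{not} uniformly bounded on the support $d\le T/2$; invoking ``$w_0$ is bounded'' does nothing for those terms. What actually makes \eqref{5.3} work is that the $E_\nu$ become strictly less singular as $\nu$ increases, so the $\tau$-integration yields $b^\nu_\pm=O(\la^{-\nu})$ for $\nu\ge1$. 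In the paper's packaging this is the two-tier symbol bound \eqref{5.13}: the full symbol $q$ splits into a bounded piece of order $|\theta|^{1/2}$ (from $\alpha_0$) and an exponentially-growing piece of order $|\theta|^{-1/2}$ (from $\alpha_j$, $j\ge1$), and after integration by parts the latter contributes only $O(\exp(Cd)\,\la^{-1})$ to $a_\pm$. Hence $|a_\pm|\le C(1+\exp(Cd)\la^{-1})$, and on the support, where $d\le (c/2)\log\la$, the second term is $O(\la^{Cc/2-1})\le O(1)$ once $c$ is small. This is exactly the mechanism you already deploy for \eqref{5.6}; you must invoke it here as well, not merely the boundedness of $w_0$.
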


Let us first handle the case were $d_{\tilde g}(x,y)\ge 1$ since proving \eqref{5.7} will be much easier than proving the 
first part of the Proposition.
Since $\cos \tau\Pe$ is self-adjoint, we only need to show that $K_{T,\la}$ can be written as in \eqref{5.2} where the 
amplitudes satisfy \eqref{5.5} and the remainder term is as in \eqref{5.6}.

To prove this we shall use the Hadamard parametrix as in B\'erard~\cite{Berard}.  As was shown there we can write for $|\tau|\ge1$
\begin{equation}\label{5.8}
\bigl(\cos \tau \Pe\bigr)(x,y)=\sum_{j=0}^m \alpha_j(x,y)\int_{-\infty}^\infty e^{i\theta(d^2-|\tau|^2)}|\tau| \, |\theta|^{1/2-j} \, d\theta +R(\tau,x,y),
\end{equation}
where $d=d_{\tilde g}(x,y)$,
\begin{equation}\label{5.9}
\alpha_0(x,y)=O(1) \quad \text{and } \, \, 
\alpha_j=O(\exp(C_j d)), \, \, j=1,2,\dots,
\end{equation}
and
\begin{equation}\label{5.10}
|\Delta_y^N\alpha_j|=O(\exp(C_Nd)), \quad j=0,1,\dots, \, \, N=1,2,\dots,
\end{equation}
and, if $m$ is large enough\footnote{Strictly speaking B\'erard~\cite{Berard} only stated this sort of bound
for $R$ itself in (42) on p. 263.  The proof of this particular pointwise estimate for the remainder was based
on energy estimates.  If one includes sufficiently many terms in \eqref{5.8} and uses higher order energy estimates 
one can obtain bounds like \eqref{5.11}.  (See also \cite{Hang}.)},
\begin{equation}\label{5.11}
|\partial^j_\tau R(\tau,x,y)| =O(\exp(Cd)), \quad j=0,1,2.
\end{equation}
In the above Fourier integrals we regularize the powers of $|\theta|$ near the origin at the expense of smooth errors that can be
absorbed in the remainder term.

The fact that the first coefficient, $\alpha_0$, in the Hadamard parametrix is bounded here is well known (see \cite{Hang}) and was
used, for instance, by the first author and Zelditch in the related work \cite{SZStein}.  It is a consequence of the G\"unther comparison theorem
and our assumption that $K\le 0$.   B\'erard~\cite{Berard} proved the other bounds \eqref{5.9}--\eqref{5.10} and used them, along with 
energy estimates, to obtain bounds of the form \eqref{5.11} for the remainder term in the parametrix.

If we change variables we can rewrite \eqref{5.8} in the more useful form
\begin{equation}\label{5.12}
\bigl(\cos \tau \Pe\bigr)(x,y)=\int_{-\infty}^\infty e^{i(d-|\tau|)\theta} q(\tau,x,y,\theta) \, d\theta +R(\tau,x,y),
\end{equation}
where the remainder term is as before and where
\begin{multline}\label{5.13}
\bigl|\partial^j_\tau \partial_\theta^k q(\tau,x,y,\theta)|\le C_{jk}\bigl[(1+|\theta|)^{1/2-k}(1+d+|\tau|)^{-j}
\\
+\exp(Cd)(1+|\theta|)^{-1/2-k}(1+d+|\tau|)^{-j}\bigr], \, \, \, \text{if } \, \, d+|\tau|\ge 1,
\end{multline}
as well as
\begin{multline}\label{5.14}
\bigl|\Delta^\ell_y \partial^j_\tau\partial_\theta^k q(\tau,x,y,\theta)\bigr|
\le C_{jk\ell}\exp(C_\ell d)(1+|\theta|)^{1/2-k}(1+d+|\tau|)^{-j}, 
\\
 \text{if } \,  d+|\tau|\ge 1.
\end{multline}

Since $\Hat \chi(\tau/T)=0$ if $|\tau|>T/2$, it is clear that by \eqref{5.11} and an integration by parts argument
$$\int\bigl(1-\beta(\tau)\bigr)\Hat \chi(\tau/T) e^{i\tau\la}R(\tau,x,y) \, d\tau =O(\la^{-2}\exp(CT)),
$$
and thus this term can be made to satisfy the bounds in \eqref{5.6} if $T=c\log\la$ with $c>0$ sufficiently small.

On account of this, if we plug the main term in \eqref{5.12} into \eqref{5.1}, we would have the first part of the proposition if we could show that
\begin{equation}\label{5.15}
\int_0^\infty \int_{-\infty}^\infty \bigl(1-\beta(\tau)\bigr) \Hat \chi(\tau/T) e^{i\tau \la} e^{i\theta(d-\tau)}q(\tau,x,y,\theta) \, d\theta d\tau
=\la^{1/2}e^{i\la d}a_+,
\end{equation}
and
\begin{equation}\label{5.16}
\int^0_{-\infty} \int_{-\infty}^\infty \bigl(1-\beta(\tau)\bigr) \Hat \chi(\tau/T) e^{i\tau \la} e^{i\theta(d+\tau)}q(\tau,x,y,\theta) \, d\theta d\tau
=\la^{1/2}e^{-i\la d}a_-,
\end{equation}
where $a_\pm$ satisfy the bounds in \eqref{5.3} and \eqref{5.4}.

To see this for \eqref{5.15} we note that the left side can be written as
$$\la^{1/2}e^{i\la d}\Bigl[\, \la^{-1/2}\int_0^\infty \int_{-\infty}^\infty \bigl(1-\beta(\tau)\bigr)\Hat \chi(\tau/T) e^{i(\theta-\la)(d-\tau)}
q(\tau,x,y,\theta) \, d\theta d\tau\, \Bigr].$$
Thus, if we set $a_+$ to be the term inside the square brackets, we can use \eqref{5.12} and integration by parts argument
to see that 
\begin{align*}
|a_+|&\le C\la^{-1/2}\iint (1+|\theta-\la|)^{-2}(1+|d-\tau|)^{-2}(1+|\theta|)^{1/2} \, d\tau d\theta
\\
&+C\exp(Cd)\la^{-1/2} \iint (1+|\theta-\la|)^{-2}(1+|d-\tau|)^{-2}(1+|\theta|)^{-1/2} \, d\tau d\theta
\\
&\le C\bigl(1+\exp(Cd) \la^{-1}\bigr).
\end{align*}
This yields the bounds in  \eqref{5.3} for $a_+$ if $T=c\log\la$ with $c>0$ small enough since $K(x,y)=0$ if $d_{\tilde g}(x,y)>T$.
If we repeat this argument and use \eqref{5.14} we also obtain the bounds in \eqref{5.4} for $a_+$ since
\begin{equation}\label{5.17}
\Delta^\ell_y d_{\tilde g}(x,y)=O(\exp(C_\ell d_{\tilde g}(x,y)))
\end{equation}
(which also follows from estimates in the appendix in B\'erard~\cite{Berard}).
Since the same argument shows
that \eqref{5.16} is valid with $a_-$ satisfying these two bounds, the proof of the first part of Proposition~\ref{prop5.1} is complete.

To prove \eqref{5.7} we recall that the factor $(1-\beta(\tau))=0$ if $|\tau|\le 3$ and so the bounds in \eqref{5.13} and \eqref{5.14} hold
on the support of the integrals in \eqref{5.15}--\eqref{5.16}.  Since $|d-\tau|\ge 1$ as well on the support if as in \eqref{5.7},
$d\le 1$, we conclude that \eqref{5.7} follows from a simple integration by parts argument. \qed

\bigskip

Note that \eqref{5.7} implies that the estimate in Proposition~\ref{prop2.1} is valid when $\alpha$ is the identity map.  To handle the
other nonzero summands in \eqref{2.16} we note that the kernel coming from the $\tau$-integration is
$K_{T,\la}(\tilde \gamma(t),\alpha(\tilde \gamma(s)))$ with $|t|$, $|s|\le 1/2$.  Our assumption that the injectivity radius of $M$ is ten or more
insures that $d_{\tilde g}(\tilde \gamma(t),\alpha(\tilde \gamma(s)))\ge 1$ in this case if $\alpha\ne Id$ and so we can
use \eqref{5.2}--\eqref{5.6}.

We shall need more information about the phase functions
\begin{equation}\label{5.18}
\phi(\alpha;t,s)=d_{\tilde g}(\tilde \gamma(t),\alpha(\tilde \gamma(s)))
\end{equation}
that arise from \eqref{5.2}.  Specifically, we shall require the following.

\begin{proposition}\label{prop5.2}  Let $\phi(\alpha;t,s)$ be as in \eqref{5.18} with $\alpha\ne Id$.  Then for each $j=1,2,3,\dots$ there is a constant $C_j$ so that
\begin{multline}\label{5.19}
|\partial_t^j\phi(\alpha;t,s)|+|\partial_s^j\phi(\alpha;t,s)|\le \exp(C_jT),
\\ \text{if } \, \, \max\{d_{\tilde g}(\tilde \gamma(t),\alpha(\tilde \gamma(s))): \, \, |t|, \, |s|\le 1/2\} \le T.
\end{multline}
Moreover, we have the uniform bounds
\begin{equation}\label{5.20}
|\partial_t\partial_s\phi(\alpha;t,s)|\le C.
\end{equation}
Additionally,
\begin{equation}\label{5.21}
\partial^2_t\phi(\alpha;t_0,s_0) \ge \exp(-CT), 
\quad \text{if } \, \, d_{\tilde g}(\tilde \gamma(t_0),\alpha(\tilde \gamma(s_0)))\le T
\end{equation}
for some $C$ if
\begin{equation}\label{5.22}
|\partial_t\phi(\alpha;t_0,s_0)|\le 1/4.
\end{equation}
\end{proposition}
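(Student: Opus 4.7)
The plan is to work throughout in the universal cover $(\R^2, \tg)$, which is a Cartan--Hadamard manifold since $K \le 0$ and $\R^2$ is simply connected. Consequently there is a unique minimizing geodesic between any two points and $d_{\tg}$ is smooth off the diagonal. For nonidentity $\alpha$ the injectivity-radius assumption forces $d_{\tg}(\tilde\gamma(t), \alpha(\tilde\gamma(s))) \ge 1$ for $|t|, |s| \le 1/2$, so in particular $\phi(\alpha; \cdot, \cdot)$ is smooth on $[-1/2,1/2]^2$. The three bounds are then proved in turn using first-variation, Jacobi-field comparison, and Hessian comparison.

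The derivative estimate \eqref{5.19} is essentially a restatement of the B\'erard bounds already invoked around \eqref{5.17}, which give $|\nabla_{x,y}^j d_{\tg}(x,y)| = O(\exp(C_j d_{\tg}(x,y)))$ on a surface with $-1 \le K \le 0$. Applying the chain rule to the composition $t, s \mapsto d_{\tg}(\tilde\gamma(t), \alpha(\tilde\gamma(s)))$, and using that $\tilde\gamma$ and $\alpha\circ\tilde\gamma$ are unit-speed geodesics (so no extra derivatives of the curves appear), yields \eqref{5.19} directly.

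For \eqref{5.20} I start from the first variation formula
$$\partial_t \phi(\alpha;t,s) = -\cos\theta_t, \qquad \partial_s \phi(\alpha;t,s) = -\cos\theta_s,$$
where $\theta_t$ and $\theta_s$ are the interior angles made at $\tilde\gamma(t)$ and $\alpha(\tilde\gamma(s))$ by the minimizing geodesic joining those points with $\tilde\gamma$ and $\alpha(\tilde\gamma)$, respectively. Differentiating once in $s$ gives $\partial_s\partial_t\phi = \sin\theta_t \cdot \partial_s\theta_t$. The rotation rate $\partial_s\theta_t$ is controlled by the Jacobi field $J$ along the connecting geodesic with $J(0) = 0$ and $J(d) = \alpha_*\tilde\gamma'(s)$: concretely, $|\partial_s\theta_t|$ equals the normal component of $J'(0)/|W|$ for $W$ the initial tangent. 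Rauch comparison with the Euclidean model (valid because $K \le 0$) bounds this by $1/d$, and since $d \ge 1$ we obtain $|\partial_s\partial_t\phi| \le C$ uniformly in $\alpha$.

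The main point is \eqref{5.21}, and this is the step I expect to require the most care. I will deduce it from the Cartan--Hadamard Hessian comparison
$$\operatorname{Hess}_x\!\Bigl(\tfrac12\, d_{\tg}(x,P)^2\Bigr) \ge \tg \quad\text{for all } P,$$
which holds on any simply connected manifold with $K \le 0$ and is proved by writing the squared distance in geodesic polar coordinates around $P$ and observing that geodesic spheres in $K \le 0$ are at least as convex as their Euclidean counterparts. Fixing $P = \alpha(\tilde\gamma(s_0))$ and setting $f(t) = \phi(\alpha; t, s_0) = d_{\tg}(\tilde\gamma(t), P)$ and $h(t) = f(t)^2/2$, the comparison gives $h''(t) \ge |\tilde\gamma'(t)|^2 = 1$. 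Combining with the identity $h'' = (f')^2 + f f''$, this rearranges to
$$f''(t_0) \ge \frac{1 - f'(t_0)^2}{f(t_0)},$$
which, under the hypothesis \eqref{5.22} that $|f'(t_0)| = |\partial_t\phi(\alpha;t_0,s_0)| \le 1/4$ and the distance bound $f(t_0) \le T$, yields $f''(t_0) \ge \tfrac{15}{16T}$. Since $T \ge 1$, this dominates $\exp(-CT)$ for any fixed $C$, and \eqref{5.21} follows with plenty of room to spare. The only delicate point is checking the Hessian inequality, which is standard but needs the two-sided curvature control $-1 \le K \le 0$ to apply comparison in the tangential direction; everything else is bookkeeping.
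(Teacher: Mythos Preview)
Your proof is correct, and for \eqref{5.19} it agrees with the paper's, but your arguments for \eqref{5.20} and especially \eqref{5.21} differ substantially from the paper's and are in some ways cleaner.

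For \eqref{5.20} the paper works in geodesic normal coordinates centered at $\tilde\gamma(t)$, writes $\alpha(\tilde\gamma(s))=x(s)$ in those coordinates, computes $\partial_t\phi=-x_1(s)/|x(s)|$, and reduces matters to showing $|\dot x(s)|\le 1$ in the \emph{Euclidean} sense; this is done via the Hamilton--Jacobi form of geodesic flow together with the volume-comparison fact $|g|\ge 1$ in normal coordinates when $K\le 0$. Your Jacobi-field/Rauch argument is the intrinsic version of the same estimate: the fact that $(D\exp)^{-1}$ is a contraction in nonpositive curvature is exactly what makes the coordinate velocity bounded by the Riemannian velocity. Both approaches use only $K\le 0$.

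For \eqref{5.21} the difference is more striking. The paper interprets $\partial_t^2\phi$ as $\sin\theta$ times the angular velocity $d\theta/dt$ of the connecting geodesic, uses the Gauss--Bonnet angle-defect inequality to bound this below by the vertex-angle increment $\Delta\theta/\Delta t$ at the far endpoint, and then invokes Proposition~\ref{topprop} (Toponogov, requiring the lower bound $K\ge -1$) to get $\Delta\theta\gtrsim \Delta t/\sinh T$, hence $\partial_t^2\phi\ge e^{-CT}$. Your Hessian-comparison route $\operatorname{Hess}(\tfrac12 d_P^2)\ge \tg$ is shorter, gives the quantitatively stronger bound $\partial_t^2\phi\ge \tfrac{15}{16T}$, and---contrary to your final remark---needs only $K\le 0$, not the two-sided pinching; the radial direction of the Hessian is always an equality and the tangential inequality comes from sphere convexity under an \emph{upper} curvature bound. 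So your approach actually removes the dependence of this particular step on Toponogov and on $K\ge -1$ (though those hypotheses are still used elsewhere in the paper).
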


The bound in \eqref{5.19} for $\partial_t^j\phi$ follows from \eqref{5.17}.  Since $\phi(\alpha;t,s)=d_{\tilde g}(\tilde \gamma(s),\alpha^{-1}(\tilde \gamma(t)))$,
the bound for $\partial_s^j\phi$ also follows from \eqref{5.17}.

To prove \eqref{5.20} we may work in geodesic normal coordinates about $\tilde \gamma(t)$, with $\tilde \gamma$ being the first coordinate axis in these
coordinates.  Write $\alpha(\tilde \gamma(s))=(x_1(s),x_2(s))$ in these
coordinates.  Then
$$\frac{\partial\phi}{\partial t}(\alpha;t,s)=-\frac{x_1(s)}{|x(s)|}.$$
Thus
$$\frac{\partial^2\phi}{\partial t\partial s}=
\frac{x_1(s)\bigl[\tfrac{d}{ds}(x_1^2(s)+x_2^2(s))\bigr]}{|x(s)|^2} -\frac{\dot x_1(s)}{|x(s)|}=O(|\dot x(s)|)$$
since our assumptions give $|x(s)|\ge 1$.  Consequently we would have \eqref{5.20} if we could show that
\begin{equation}\label{5.23}
|\dot x(s)|=O(1).
\end{equation}

To do this we note that since $\alpha$ is an isometric mapping $\alpha(\tilde \gamma(s))=x(s)$ must be a geodesic.  We recall that
if
$$p(x,\xi)=\sqrt{\sum_{j,k=1}^2 g^{jk}(x)\xi_j\xi_k}$$
where $g^{jk}$ is the cometric, i.e., $g^{jk}=(g_{jk})^{-1}$, then by the Hamilton-Jacobi formulation of unit speed geodesic
flow (see, e.g. \S 2.3 in \cite{Hang}) we must have that
$$\dot x(s) =\frac{\partial p}{\partial \xi}\bigl(x(s),\xi(s)\bigr),
\quad \text{where } \, \, p(x(s),\xi(s))=1.$$

Therefore, to get \eqref{5.23} it suffices to show that in our geodesic coordinate system
\begin{equation}\label{5.24}
\bigl|\sum_{j=1}^2 g^{jk}(x)\xi_j\bigr|\le 1\quad \text{if } \, \, \sum_{j,k=1}^2 g^{jk}(x)\xi_j\xi_k=1.
\end{equation}
After a rotation $U$
$$U^t g^{jk}(x) U=\text{diag}(1,|g|^{-1}),$$
where $|g|=\det (g_{jk}(x))$.  By volume comparison estimates since we are assuming that $K\le 0$ in $(M,g)$ and hence in $(\Rt,\tilde g)$ we
must have that $|g|\ge1$ since we are working in geodesic normal coordinates about $\gamma(t)$. 

If $\xi=U\eta$  then
\begin{equation}\label{5.25}
\sum_{j,k=1}^2 g^{jk}(x)\xi_j\xi_k=1 \, \, \iff \, \, \eta_1^2+|g|^{-1}\eta_2^2=1.
\end{equation}
Thus, since $|g|\ge1$
\begin{multline*}
\bigl|\bigr(g^{jk}(x)\bigr)\xi\bigr|^2=\bigl|\bigr(g^{jk}(x)\bigr)U\eta\bigr|^2 =\bigl|U^t g^{jk}(x)U\eta\bigr|^2
\\
=\eta_1^2+|g|^{-2}\eta_2^2\le \eta_1^2+|g|^{-1}\eta_2^2\le 1,
\end{multline*}
as desired, which completes the proof of \eqref{5.23} and \eqref{5.20}.

To prove \eqref{5.21} we shall again work in geodesic normal coordinates, this time about $\tilde \gamma(t_0)$, 
again with $\tilde \gamma$ being the first coordinate axis.  Then, as before
$$\frac{\partial \phi}{\partial t}(\alpha;t_0,s_0)=-\frac{x_1(s)}{|x(s)|}=-\cos \theta_{s_0}(t_0),$$
where $\theta_{s_0}(t_0)\in [0,\pi)$ denotes the intersection angle of the geodesic ray $\tilde \gamma(t)$, $t\ge t_0$ with the
geodesic ray starting at $\tilde \gamma(t_0)$ and passing through $x(s_0)=\alpha(\tilde \gamma(s_0))$.  See Figure~\ref{fig4}.
For $\Delta t>0$ small, as in this figure, consider the angle $\theta_{s_0}(t_0+\Delta t)$ formed by the geodesic
ray $\tilde \gamma(t)$, $t>t_0+\Delta t$ and the geodesic ray passing through $\tilde \gamma(t_0+\Delta t)$ and
$\alpha(\tilde \gamma(s_0))$ as in the Figure.  Then
$$\frac{\partial^2 \phi}{\partial t^2}(\alpha;t_0,s_0) = \sin(\theta_{s_0}(t_0)) \lim_{\Delta t\searrow 0}
\frac{\theta_{s_0}(t_0+\Delta t)-\theta_{s_0}(t_0)}{\Delta t}.$$
Our assumption \eqref{5.22} means that $\sin \theta_{s_0}(t_0)\ge 1/10$ and so
$$ \frac{\partial^2 \phi}{\partial t^2}(\alpha;t_0,s_0)  \approx \lim_{\Delta t\searrow 0}\, 
 \frac{\theta_{s_0}(t_0+\Delta t)-\theta_{s_0}(t_0)}{\Delta t} .$$
Since $K\le 0$, by the Rauch comparison theorem (see \cite{Kling}), if $\Delta \theta$ denotes the angle of the aforementioned rays
through $\alpha(\tilde \gamma(s_0))$ as in Figure~\ref{fig4}, then we must have
$$\Delta \theta +\theta_{s_0}(t_0)+\bigl(\pi -\theta_{s_0}(t_0+\Delta t)\bigr)\le \pi,$$
since $\pi-\theta_{s_0}(t_0+\Delta t)$, $\theta_{s_0}(t_0)$ and $\Delta \theta$ are the three interior angles for the triangle
with vertices $\tilde \gamma(t_0)$, $\tilde \gamma(t_0+\Delta t)$ and $\alpha(\tilde \gamma(s_0))$.
Thus,
$$ \frac{\partial^2 \phi}{\partial t^2}(\alpha;t_0,s_0)  \approx \lim_{\Delta t\searrow 0} \frac{\theta_{s_0}(t_0+\Delta t)-\theta_{s_0}(t_0)}{\Delta t}
\ge \lim_{\Delta t\searrow 0}\frac{\Delta \theta}{\Delta t}.$$
By Proposition~\ref{topprop}, we must have that
$\Delta \theta \ge \sinh (\Delta t/2)/\sinh T$, which leads to \eqref{5.21} and completes the proof of Proposition~\ref{prop5.2}.  \qed

\begin{figure}[ht]
\centering
\includegraphics[width=1.1\textwidth]{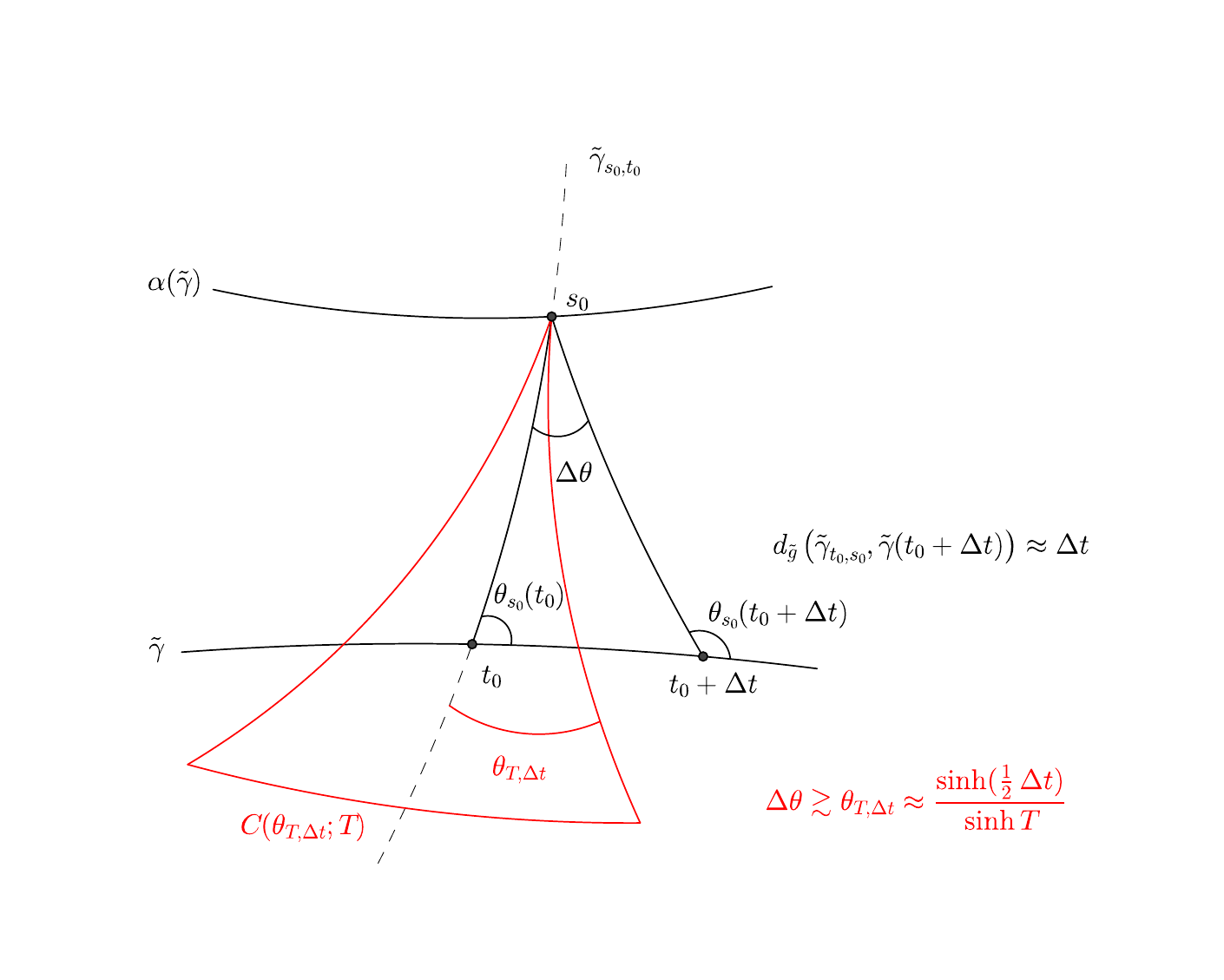}
\caption{Bounding Hessian from below}
 \label{fig4}
\end{figure}

\bigskip

We also need the following simple consequence of Proposition~\ref{propg.1}, which was based on our assumption \eqref{i.2}.

\begin{proposition}\label{prop5.3}  Let $\phi(\alpha;t,s)$  be is as above with $\alpha \ne Id$.   Then if for $t_0,s_0\in [-1/2,1/2]$ and  $\la\gg 1$
\begin{equation}\label{5.26}
\bigl|\nabla_{t,s}\phi(\alpha;t_0,s_0)\bigr| \le \frac12 \la^{-1/3},
\end{equation}
it follows that if $\e_0$ is as in \eqref{g.3}, we have
\begin{multline}\label{5.27}
\bigl|\nabla_{t,s}\phi(\alpha;t,s)\bigr|\ge \frac12 \la^{-1/4}, 
\\
 \text{if } \, \, \, 
\max\bigl(|t-t_0|, \, |s-s_0|\bigr)\ge \la^{-\e_0} \, \, \text{and } \, |t|,|s|\le 1/2.
\end{multline}
\end{proposition}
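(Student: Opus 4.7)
The plan is to reduce the statement to a direct application of Proposition~\ref{propg.1}, by identifying the partial derivatives of $\phi$ with cosines of intersection angles at the endpoints of the minimizing geodesic between $\tilde\gamma(t)$ and $\alpha(\tilde\gamma(s))$. Just as in the proof of Proposition~\ref{prop5.2}, working in geodesic normal coordinates about $\tilde\gamma(t)$ one finds
\[
\partial_t\phi(\alpha;t,s) = -\cos\theta_t,
\]
where $\theta_t\in[0,\pi]$ is the angle at $\tilde\gamma(t)$ between $\tilde\gamma$ and the minimizing geodesic to $\alpha(\tilde\gamma(s))$. The same calculation performed in normal coordinates about $\alpha(\tilde\gamma(s))$, using that $\alpha\circ\tilde\gamma$ is itself a geodesic (because $\alpha$ is an isometry of $(\R^2,\tilde g)$), yields $\partial_s\phi(\alpha;t,s)=-\cos\theta_s$, where $\theta_s$ is the analogous angle at $\alpha(\tilde\gamma(s))$. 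In particular $|\nabla_{t,s}\phi|^2=\cos^2\theta_t+\cos^2\theta_s$.

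With this identification in hand, the proof runs as follows. The hypothesis \eqref{5.26} forces $|\cos\theta_{t_0}|,\,|\cos\theta_{s_0}|\le\tfrac12\lambda^{-1/3}$, so for $\lambda\gg 1$ one has $|\pi/2-\theta_{t_0}|,\,|\pi/2-\theta_{s_0}|\le\lambda^{-1/3}$. After possibly reversing the parametrization of $\tilde\gamma$ or of $\alpha\circ\tilde\gamma$ on $[-1/2,1/2]$, which affects neither \eqref{5.26}, \eqref{5.27}, nor any distance, we may arrange that $\theta_{t_0},\theta_{s_0}\in[\pi/2-\lambda^{-1/3},\pi/2]$, matching hypothesis \eqref{g.2} of Proposition~\ref{propg.1}. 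The required separation $d_{\tilde g}(\tilde\gamma_1(t),\tilde\gamma_2(s))\ge 1$ for the two unit-length segments $\tilde\gamma_1=\tilde\gamma|_{[-1/2,1/2]}$ and $\tilde\gamma_2=(\alpha\circ\tilde\gamma)|_{[-1/2,1/2]}$ follows from the standing assumption that $(M,g)$ has injectivity radius at least ten combined with $\alpha\ne Id$, exactly as already observed at the start of \S 5.

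I would then invoke Proposition~\ref{propg.1} directly to obtain
\[
\max\bigl(\pi/2-\theta_t,\,\pi/2-\theta_s\bigr)\ge\lambda^{-1/4}
\]
for every $(t,s)\in[-1/2,1/2]^2$ with $\max(|t-t_0|,|s-s_0|)\ge\lambda^{-\e_0}$. Since $|\cos\theta|=\sin(\pi/2-\theta)\ge(\pi/2-\theta)/2$ whenever $\pi/2-\theta\in[0,1]$, this transfers back to $\max\bigl(|\cos\theta_t|,|\cos\theta_s|\bigr)\ge\tfrac12\lambda^{-1/4}$ on the same region, which is even stronger than the desired bound \eqref{5.27}. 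There is no serious obstacle here: the substantive geometric content is already packaged in Proposition~\ref{propg.1} through Gauss--Bonnet and the curvature-integral lower bound \eqref{g.1}, and the only point demanding care is the sign bookkeeping that converts the gradient hypothesis into the angle hypothesis \eqref{g.2}.
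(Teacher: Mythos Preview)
Your proposal is correct and follows essentially the same route as the paper's own proof, which is even terser: the paper simply observes that $|\partial_t\phi(\alpha;t,s)|=\cos\theta_t$ and $|\partial_s\phi(\alpha;t,s)|=\cos\theta_s$ with $\theta_t,\theta_s$ the (acute) intersection angles of Proposition~\ref{propg.1}, and declares the result immediate from that proposition. Your write-up supplies the details the paper omits; the one small simplification available is that if you take $\theta_t,\theta_s\in[0,\pi/2]$ to be acute angles from the outset (as in Proposition~\ref{propg.1}), the reparametrization step becomes unnecessary, since $|\cos\theta_{t_0}|\le\tfrac12\la^{-1/3}$ already forces $\theta_{t_0}\in[\pi/2-\la^{-1/3},\pi/2]$.
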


By the above arguments $|\partial_t\phi(\alpha;t,s)|=\cos \theta_t$ and $|\partial_s \phi(\alpha;t,s)|=\cos \theta_s$ where $\theta_t$ and
$\theta_s$ are as in Proposition~\ref{propg.1}.  From this one immediately sees that Proposition~\ref{prop5.3} follows from 
Proposition~\ref{propg.1}.  \qed

\newsection{End of proof of period integral estimates}

In this section we shall complete the proof of Proposition~\ref{prop2.1} and hence that of Theorem~\ref{mainthm}.  We need to verify that we can fix
$c=c_M>0$ so that for $\la\gg1$ \eqref{2.16} is valid for some $\delta_M>0$.  We shall take 
$$\delta_M=\e_0/2$$
where $0<\e_0<1/10$ is as in \eqref{g.3} and \eqref{5.27}.

As we pointed out earlier, we know that \eqref{2.16} is valid when $\alpha= Id$.  Hence it suffices to show that all the other nonzero terms
there satisfy
\begin{equation}\label{6.1}
\Bigl|\, \iint b(t,s) K_{T,\la}\bigl(\tilde \gamma(t),\alpha(\tilde \gamma(s))\bigr) \, dt ds \, \Bigr|
\le C_{b,M}\la^{-\e_0/2},
\end{equation}
if $T=c_M\! \log\la$ with $c_M>0$  small enough.  Recall that $b\in C^\infty_0((-1/2,1/2)^2)$.

In view of the estimate \eqref{5.6} for the remainder term in \eqref{5.2}, it suffices to show that
\begin{equation}\label{6.2}
\la^{1/2}\Bigl| \, \iint b(t,s)a_\pm(T,\la; \tilde \gamma(t), \alpha(\tilde \gamma(s))) \, e^{\pm i \la\phi(\alpha;t,s)}\, dt ds\, \Bigr|
\le C_{b,M}\la^{-\e_0/2}
\end{equation}
under the above assumptions with $\phi(\alpha;t,s)=d_{\tilde g}(\tilde \gamma(t),\alpha(\tilde \gamma(s)))$ as in 
Propositions \ref{prop5.2} and \ref{prop5.3}.  As  noted before, we have $\phi(\alpha; t,s)=d_{\tilde g}(\alpha^{-1}(\tilde \gamma(t)), \tilde \gamma(s))$.  
Also since $\bigl(\cos \tau \sqrt{-\Delta_{\tilde g}}\bigr)(x,y)=\bigl(\cos \tau \sqrt{-\Delta_{\tilde g}}\bigr)(\alpha^{-1}(x),\alpha^{-1}(y))$, we have that 
$K_{T,\la}(x,y)=K_{T,\la}(\alpha^{-1}(x),\alpha^{-1}(y))$ and so
\begin{equation*}
a_\pm(T,\la; \tilde \gamma(t),\alpha(\tilde \gamma(s)))=a_\pm(T,\la,\alpha^{-1}(\tilde \gamma(t)),\tilde \gamma(s)).
\end{equation*}

Therefore, by \eqref{5.4}, \eqref{5.5} and \eqref{5.17} if $T=c\log\la$ with $c=c_M>0$ sufficiently small and if $K_{T,\la}$ does not
vanish identically we have
\begin{multline}\label{6.3}
|\partial^j_t\phi(\alpha;s,t)|+|\partial^j_s\phi(\alpha;s,t)|+|\partial_t^ja(t,\la; \tilde \gamma(t),\alpha(\tilde \gamma(s)))|
\\
+|\partial_s^ja(t,\la; \tilde \gamma(t),\alpha(\tilde \gamma(s)))|
\le \la^{\e_0/8}, \quad \text{with } \, \, \, 1 \le j\le \left \lceil{8\e_0^{-1}}\right \rceil.
\end{multline}
We use $\e_0/8$ here since we shall eventually want to apply Proposition~\ref{propositions.3} with $\delta=\e_0/4$.

To apply Proposition~\ref{prop5.3} and the stationary phase bounds from \S 4, we shall consider two cases:
\begin{equation}\label{6.4}
|\nabla_{t,s}\phi(\alpha;s,t)|\ge \frac12 \la^{-1/3}, \quad |t|, \, |s| <1/2,
\end{equation}
and the complementary case where
\begin{equation}\label{6.5}
|\nabla_{t,s}\phi(\alpha;t_0,s_0)|\le \frac12 \la^{-1/3} \quad \text{for some } \, \, (t_0,s_0)\in (-1/2)\times (-1/2).
\end{equation}

To show that \eqref{6.2} is valid under the assumption \eqref{6.4} we shall use a partition of unity argument to exploit \eqref{6.3}.  Specifically,
choose $\rho\in C^\infty_0((-1,1))$ satisfying 
$$\sum_{j=-\infty}^\infty \rho(t-j)\equiv 1, \quad t\in \R.$$
Then for $m=(m_1,m_2)\in {\mathbb Z}^2$ set
$$\rho_m(t,s)=\rho(\la^{1/2}t-m_1) \, \rho(\la^{1/2}s-m_2).$$
It follows that $\sum_{m\in {\mathbb Z}^2}\rho_m(t,s)\equiv 1$ and that $|\partial_t^j\rho_m| +|\partial_s^j\rho_m|\le C_j\la^{j/2}$.  Also, $\rho_m$
is supported in a $O(\la^{-1/2})$ size neighborhood about $(t_m,s_m)=(\la^{-1/2}m_1,\la^{-1/2}m_2)$.  Assuming that this neighborhood intersects
\linebreak
$(-1/2,1/2)\times (-1/2,1/2)$ and that $(\overline{t_m},\overline{s_m})$ is in the intersection, by \eqref{6.4} we must have that
$$|\partial_t\phi(\alpha;\overline{t_m},\overline{s_m})|\ge \frac14\la^{-1/3}, 
\, \, \, \text{or } \, \, |\partial_s\phi(\alpha;\overline{t_m},\overline{s_m})|\ge \frac14\la^{-1/3}.$$
Let us assume the former since the argument for the latter is similar.  By \eqref{5.20} and \eqref{6.3} we must have that
$$|\partial_t\phi|\ge \frac14\la^{-1/3}-O(\la^{-1/2}\la^{\e_0/4})-O(\la^{-1/2})\ge \frac18\la^{-1/3} \quad
\text{on  supp }\rho_m$$
for large $\la$ since $\e_0<1/10$.  Therefore, by Lemma~\ref{lemmas.1}, we have that
$$\la^{1/2}\Bigl| \, \int b(t,s)\rho_m(t,s)a_\pm(T,\la; \tilde \gamma(t), \alpha(\tilde \gamma(s))) \, e^{\pm i\la \phi(\alpha;t,s)} \, dt \, \Bigr|
\le C_{b,\e_0}\la^{-3/2}.$$
Since $\rho_m(t,s)=0$ if $|s-s_m|\ge C\la^{-1/2}$, this in turn gives the bounds
$$\la^{1/2}\Bigl| \, \iint b(t,s)\rho_m(t,s)a_\pm(T,\la; \tilde \gamma(t), \alpha(\tilde \gamma(s))) \, e^{\pm i\la \phi(\alpha;t,s)} \, dtds \, \Bigr|
\le C_{b,\e_0}\la^{-2}.$$
Since there are $O(\la)$ such terms which are nonzero, we conclude that when \eqref{6.4} holds we obtain a stronger version of 
\eqref{6.2} where $\la^{-\e_0/2}$ is replaced by $\la^{-1}$.

To complete the proof, we must show that \eqref{6.2} is valid when we assume \eqref{6.5}.  We shall use
Proposition~\ref{prop5.3} for this (which makes use of our curvature assumption \eqref{i.2}).  To this end, let $\beta\in C^\infty_0(\R)$ be as
above, i.e.,
$$\beta(t)=1 \, \, \text{on } \, \, [-3,3] \, \, \, \text{and } \, \, \, \beta(t)=0 \, \, \, \text{for } \, \, |t|\ge4.$$
We then obtain
\begin{multline*}
\Bigl|\, \iint \bigl(1-\beta(\la^{\e_0}|t-t_0|)\beta(\la^{\e_0}|s-s_0|)\bigr) \, a_\pm(T,\la; \tilde \gamma(t), \alpha(\tilde \gamma(s)))
\, e^{\pm i\la\phi(\alpha;t,s)} \, dt ds \, \Bigr|
\\
\le C_{b,\e_0}\la^{-3/2}
\end{multline*}
by the previous argument since by \eqref{5.27} 
$$|\nabla_{t,s}\phi(\alpha;t,s)|\ge \frac12\la^{-1/4} \quad 
\text{if } \, \, \bigl(1-\beta(\la^{\e_0}|t-t_0|)\beta(\la^{\e_0}|s-s_0|)\bigr)\ne 0.$$

Thus, our proof would be complete if we could show that
\begin{multline}\label{6.6}
\Bigl|  \iint  \beta(\la^{\e_0}|t-t_0|)\beta(\la^{\e_0}|s-s_0|)  b(t,s) \, a_\pm(T,\la; \tilde \gamma(t), \alpha(\tilde \gamma(s))) \,
e^{\pm i \la\phi(\alpha:t,s)}  dt ds \Bigr| 
\\
\le C_{b,M}\la^{-\e_0/2-1/2}.
\end{multline}
To do this, we note that, by \eqref{5.21}, we have that when $T=c\log\la$ with $c=c_M>0$ small,
$$|\partial_t^2\phi(\alpha;t,s)|\ge \la^{-\e_0/8} \quad \text{if } \, \, 
\beta(\la^{\e_0}|t-t_0|)\beta(\la^{\e_0}|s-s_0|)\ne 0,$$
since our assumption \eqref{6.5} along with \eqref{6.3} and \eqref{5.20} ensure that \eqref{5.22} is valid for such $(t,s)$.
Therefore, by Proposition~\ref{propositions.3} with $\delta=\e_0/4$, we have that for each $|s|\le 1/2$
\begin{multline*}
\la^{1/2}\Bigl|  \int  \beta(\la^{\e_0}|t-t_0|)\beta(\la^{\e_0}|s-s_0|)  b(t,s) \, a_\pm(T,\la; \tilde \gamma(t), \alpha(\tilde \gamma(s))) 
e^{\pm i \la \phi(\alpha:t,s)}  dt   \Bigr| 
\\
\le C_{b,M}\la^{\e_0/2}.
\end{multline*}
Since $s\to \beta(\la^{\e_0}|s-s_0|)$ is supported in an interval of size $\approx \la^{-\e_0}$ this implies \eqref{6.6}, which completes
the proof.  \qed

\bibliography{EF}{}
\bibliographystyle{amsplain}

\end{document}